\tikzstyle{block}=[ellipse, fill=blue, opacity=0.3]
\tikzstyle{Xsize}=[minimum size=1.5cm]
\let\x\proof\let\y\endproof
\let\proof\relax\let\endproof\relax
\let\proof\x\let\endproof\y
\newtheorem{thm}{Theorem}[section]
\newtheorem*{thm*}{Theorem}
\newtheorem{lemma}[thm]{Lemma}
\newtheorem{prop}[thm]{Proposition}
\theoremstyle{definition}
\newtheorem{defn}[thm]{Definition}
\newtheorem{ex}[thm]{Example}
\newtheorem{rem}[thm]{Remark}
\newcommand{\Set}[1]{\ensuremath{\mathcal{#1}}} 
\newcommand{\Dfn}[1]{\emph{#1}} 
\newcommand{\eval}[2][\right]{\relax
  \ifx#1\right\relax \left.\fi#2#1\rvert}
\newcommand{\Sp}{\mathrm{Sp}}
\newcommand{\GL}{\mathrm{GL}}
\newcommand{\Hom}{\mathrm{Hom}}
\newcommand{\End}{\mathrm{End}}
\newcommand{\bN}{\mathbb{N}}
\newcommand{\bC}{\mathbb{C}}
\newcommand{\bZ}{\mathbb{Z}}
\newcommand{\Br}{\mathsf{Br}} 
\newcommand{\cJ}{\mathsf{J}} 
\newcommand{\T}{\mathsf{T}} 
\newcommand{\fS}{{\mathfrak S}}
\newcommand{\pr}{\mathrm{pr}}
\newcommand{\dc}{\mathsf{D}} 
\newcommand{\Pf}{\mathrm{Pf}}
\DeclareMathOperator{\ev}{ev}
\DeclareMathOperator{\bev}{\overline{ev}} 
\DeclareMathOperator{\maj}{maj}
\DeclareMathOperator{\id}{id}
\DeclareMathOperator{\tr}{\mathbf{tr}}   
\DeclareMathOperator{\ch}{\mathbf{ch}}   
\DeclareMathOperator{\fd}{\mathbf{fd}}   
\title{Combinatorics of symplectic invariant tensors}%
\author{Martin Rubey\addressmark{1}\thanks{Email:
    \email{Martin.Rubey@tuwien.ac.at}}\and%
  Bruce W. Westbury\addressmark{2}\thanks{Email:
    \email{Bruce.Westbury@warwick.ac.uk}}}
\address{\addressmark{1}Fakult\"at f\"ur Mathematik und
  Geoinformation, TU Wien, Austria\\
  \addressmark{2}Department of Mathematics, University of Warwick,
  Coventry, CV4 7AL}%
\begin{document}
\maketitle
\begin{abstract}
\paragraph{Abstract.}
An important problem from invariant theory is to describe the
subspace of a tensor power of a representation invariant under the
action of the group.  According to Weyl's classic, the first main
(later: \lq fundamental\rq) theorem of invariant theory states that
all invariants are expressible in terms of a finite number among
them, whereas a second main theorem determines the relations between
those basic invariants.

Here we present a transparent, combinatorial proof of a second
fundamental theorem for the defining representation of the symplectic
group $\Sp(2n)$.  Our formulation is completely explicit and provides
a very precise link to $(n+1)$-noncrossing perfect matchings, going
beyond a dimension count.  As a corollary, we obtain an instance of
the cyclic sieving phenomenon.

\paragraph{R\'esum\'e.}

Une probl\'eme importante de la th\'eorie des invariantes est de
d\'ecrire le sous espace d'une puissance tensorielle d'une
r\'epresentation invariant \`a l'action de la groupe.  Suivant la
classique de Weyl, la th\'eoreme fondamentale premiere pour la
r\'epresentation standard de la group sympl\'ectique dit que toutes
invariantes peuvent \^etre expriment entre un nombre fini d'entre
eux.  Ainsi, une th\'eoreme fondamentale seconde determine les
r\'elations entre ces invariantes basiques.

Ici, nous pr\'esentons une preuve transparente d'une th\'eoreme
fondamentale seconde pour la r\'epresentation standard de la groupe
sympl\'ectique $\Sp(2n)$.  Notre formulation est completement
explicite est elle provide un lien tres pr\'ecis avec les couplages
parfaites $(n+1)$-noncroissants, plus pr\'ecis qu'un denombrement de
la dimension.  Comme corollaire nous exhibons une ph\'enom\`ene du
crible cyclique.
\end{abstract}

\begin{center} Dedicated to Mia and George
\end{center}

\section{Introduction}
The primary motivation of this article is a specific example of the
cyclic sieving phenomenon, as introduced by Reiner, Stanton and
White~\cite{MR2087303}, concerning the set $X(r,n)$ of
$(n+1)$-noncrossing perfect matchings of $\{1,\dots,2r\}$.  These are
perfect matchings that do not contain any set of $(n+1)$ pairs
$(a_1,b_1),\dots,(a_{n+1},b_{n+1})$ with $a_1<\dots<a_{n+1}<b_1<\dots
<b_{n+1}$.

A standard visualisation of a perfect matching is obtained by placing
the numbers from $1$ to $2r$ in this order on a circle and connecting
two numbers by a straight edge if they form a pair in the matching.
Using this visualisation, a perfect matching is $(n+1)$-noncrossing
if at most $n$ edges cross mutually.  The set $X(r,n)$ carries a
natural action of the cyclic group of order $2r$, given by $\rho:
i\mapsto i\pmod{2r}+1$, i.e., the rotation map.

With this definition we can state what might be taken as the main
theorem of this article:
\begin{thm*} Let
  \begin{equation*}
    P(q) = \sum_T q^{\maj T},
  \end{equation*}
  where the sum is over all $n$-symplectic oscillating tableaux of
  length $2r$ and weight $0$ and $\maj T$ is the sum of the positions
  of the descents in $T$, as defined in~\cite{MR3226822}.

  Then the triple $\big(X(r,n), \rho, P(q)\big)$ exhibits
  the cyclic sieving phenomenon.
\end{thm*}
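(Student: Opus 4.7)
The plan is to prove the cyclic sieving phenomenon by establishing a bijection between $X(r,n)$ and oscillating tableaux that intertwines rotation with promotion, and then invoking a promotion-based cyclic sieving theorem for the major index statistic.

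First, I would set up a bijection $\Phi$ from $X(r,n)$ to the set $T(r,n)$ of $n$-symplectic oscillating tableaux of length $2r$ and weight zero. Such a bijection arises naturally from Schur--Weyl duality for $\Sp(2n)$: the set $T(r,n)$ indexes a path basis of the invariant space $(V^{\otimes 2r})^{\Sp(2n)}$ obtained by iterated branching of the defining representation $V$, while the second fundamental theorem (the main content of the paper) yields a diagrammatic basis indexed by $X(r,n)$. A suitable $\Phi$ is a variant of Sundaram's symplectic insertion algorithm, restricted to the weight-zero case and composed with standardization. The common cardinality $|X(r,n)|=|T(r,n)|$ already gives $P(1) = |X(r,n)|$, verifying the first instance of the cyclic sieving identity.

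Second, I would show $\Phi\circ\rho=\pr\circ\Phi$, where $\pr$ denotes promotion on oscillating tableaux, defined via jeu de taquin on the sequence of shapes (handling both addition and deletion steps). This generalises Rhoades' theorem that rotation on ordinary noncrossing matchings corresponds to promotion on rectangular standard Young tableaux. Third, I would apply a promotion--major-index cyclic sieving theorem for $(T(r,n),\pr,P(q))$: this can be approached either by recognising $P(q)$ as a principal specialisation of the graded character of $(V^{\otimes 2r})^{\Sp(2n)}$ and appealing to Springer's theory of regular elements in the Weyl group of $\Sp(2n)$, or by direct bijective verification along the lines of existing oscillating-tableau results. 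Combining the three steps yields $P(\zeta^d)=|X(r,n)^{\rho^d}|$ for $\zeta=e^{2\pi i/(2r)}$, as required.

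The principal obstacle is the second step, proving $\Phi\circ\rho=\pr\circ\Phi$. Rotation of a matching interacts with Sundaram-type insertion in a delicate way, because the algorithm alternates between inserting letters and deleting columns, reflecting the alternation of adding versus removing boxes in an oscillating tableau. Tracking how a single cyclic shift propagates through this procedure requires a careful local analysis, and it is precisely here that the $(n+1)$-noncrossing condition on $X(r,n)$ must match the $n$-symplectic constraint on $T(r,n)$; a naive jeu-de-taquin calculation does not automatically respect either. A secondary difficulty is the sign bookkeeping when translating from the tensor-rotation action on $(V^{\otimes 2r})^{\Sp(2n)}$ to the combinatorial rotation on $X(r,n)$, which should be addressed by choosing the diagrammatic basis so that contractions are sign-consistent under cyclic shift.
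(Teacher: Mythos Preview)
Your route is genuinely different from the paper's, and the difference matters.

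The paper never introduces promotion on oscillating tableaux and never needs an intertwining statement of the form $\Phi\circ\rho=\pr\circ\Phi$.  Instead, it works directly with the set $X(r,n)$ as a basis of the representation $U=\Hom_{\Sp(2n)}(\bC,\otimes^{2r}V)$: the second fundamental theorem (Theorem~\ref{thm:sft} and Theorem~\ref{thm:Brauer-basis}) shows that the $(n+1)$-noncrossing matchings form a basis of this space, and geometric rotation of a diagram visibly coincides with the action of the long cycle $c\in\fS_{2r}$ on tensor positions, so $c$ permutes this basis.  One then invokes the general principle (Theorem~\ref{cor:spr}) that whenever a basis of an $\fS_r$-module is permuted by the long cycle, the fake degree of the Frobenius character is a cyclic sieving polynomial.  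Oscillating tableaux enter only at the very end, to identify $\fd\ch(U)$ with $\sum_T q^{\maj T}$ via the result of~\cite{MR3226822}; they are never equipped with a cyclic action.

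Your approach could in principle succeed, but the step you flag as the principal obstacle---the equivariance of Sundaram's bijection with respect to rotation and promotion---is a substantial result in its own right, and you have not supplied an argument for it.  Worse, your third step is in danger of circularity: proving a CSP for $(T(r,n),\pr,P(q))$ via Springer's theory of regular elements amounts to exhibiting a basis of $U$ permuted by the long cycle, which is exactly what $X(r,n)$ already provides once the second fundamental theorem is in hand.  So the detour through oscillating tableaux and promotion buys nothing here; the paper's argument is strictly shorter and avoids precisely the step you identify as hard.
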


However, we would like to stress that this is merely a by-product of
the approach taken in this article to the invariant theory of the
classical groups, aiming at tying together their representation
theory and combinatorics in a concrete fashion.

In our main example we consider tensor powers of the defining
representation $V$ of the symplectic group $\Sp(2n)$, the group
acting diagonally.  The symmetric group $\fS_r$ also acts on $\otimes^r V$,
by permuting tensor positions.  This action is inherited by the
subspace of $\otimes^r V$ which is invariant under the action of
$\Sp(2n)$.

An effective method to understand both the tensor powers $\otimes^r
V$ and their invariant subspaces is to use diagram categories.  In
the case at hand this is a specialisation $\dc_{\Sp(2n)}$ of the
Brauer category.  The objects of this category are the natural
numbers and $\Hom_{\dc_{\Sp(2n)}}(r, s)$ is the vector space whose
basis is the set of perfect matchings of $\{1,\dots,r+s\}$, with a
natural composition, see
Section~\ref{sec:fundamental-theorems-Brauer}.

The connection to the tensor powers of the defining representation of
$\Sp(2n)$ is established by considering the category $\T_{\Sp(2n)}$
of invariant tensors of the defining representation of the symplectic
group, and a certain functor $\ev_{\Sp(2n)}$ from the diagram
category $\dc_{\Sp(2n)}$ to $\T_{\Sp(2n)}$, see
Definition~\ref{defn:ev}.

The \emph{first fundamental theorem}, Weyl~\cite[Theorem
(6.1A)]{MR1488158}, due to Brauer \cite{MR1503378},
is equivalent to the statement that this functor
is full, that is, surjective on morphisms.  What is traditionally
named \emph{second fundamental theorem} is a description of the
kernel of the linear map from $\Hom_{\dc_{\Sp(2n)}}(r, s)$ to
$\Hom_{\T_{\Sp(2n)}}(r, s)$ induced by $\ev_{\Sp(2n)}$.  We provide
the following explicit formulation, which is also a key ingredient
for the proof of Theorem~\ref{thm:pmcsp}.
\begin{thm*}
  \renewcommand*{\thefootnote}{$\dagger$} Let
  $\Pf\in\Hom_{\dc_{\Sp(2n)}}\big(0,2(n+1)\big)$ be the sum of all
  perfect matchings of $\{1,\dots,2(n+1)\}$ and let $\Pf^{(n)}$ be
  the \lq pivotal symmetric\rq\footnote{%
    We define pivotal and symmetric ideals in~\cite{RubeyWestbury},
    and restrict ourselves here to down-to-earth language in
    Definitions~\ref{dfn:Pf-diagrammatic} and~\ref{dfn:Pf-ideal}.}\
  ideal generated by $\Pf$.  Then the categories
  $\dc_{\Sp(2n)}/\Pf^{(n)}$ and $\T_{\Sp(2n)}$ are isomorphic.
  Moreover, the set $X(r,n)$ is a basis of
  $\Hom_{\dc_{\Sp(2n)}/\Pf^{(n)}}(0,2r)$.
\end{thm*}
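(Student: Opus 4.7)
The plan is to prove the theorem in two coordinated stages. First, I would verify that the evaluation functor $\ev_{\Sp(2n)}$ descends to a well-defined full functor on the quotient $\dc_{\Sp(2n)}/\Pf^{(n)}$; second, I would combine a straightening argument for $X(r,n)$ with a dimension count, so that the spanning property forces linear independence, simultaneously identifying $X(r,n)$ as a basis and the induced functor as an isomorphism.

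For the first stage, I would compute directly that $\ev_{\Sp(2n)}(\Pf)$, evaluated on any vectors $v_1,\dots,v_{2(n+1)} \in V$, reproduces (up to signs inherent to the symplectic diagrammatics) the Pfaffian of the Gram matrix $\bigl(\omega(v_i,v_j)\bigr)_{i,j}$ of the defining symplectic form $\omega$. Since $\dim V = 2n < 2(n+1)$, this matrix is singular and its Pfaffian vanishes identically, so $\Pf \in \ker(\ev_{\Sp(2n)})$. The kernel of a strong monoidal functor between pivotal symmetric categories is itself a pivotal symmetric ideal, hence contains $\Pf^{(n)}$; this yields an induced functor $\dc_{\Sp(2n)}/\Pf^{(n)} \to \T_{\Sp(2n)}$, full by the first fundamental theorem of Brauer--Weyl.

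For the second stage, I would give a straightening algorithm: starting from a perfect matching of $\{1,\dots,2r\}$ that contains $n+1$ mutually crossing arcs, isolate the relevant $2(n+1)$ endpoints and apply the relation $\Pf \equiv 0$ to rewrite the diagram as a signed sum of matchings that are strictly smaller in a well-founded statistic, such as the lexicographic order on the multiset of sizes of maximal sets of mutually crossing arcs. Iteration terminates at matchings in $X(r,n)$, so $X(r,n)$ spans $\Hom_{\dc_{\Sp(2n)}/\Pf^{(n)}}(0,2r)$. To close the argument it suffices to prove $|X(r,n)| \le \dim \Hom_{\T_{\Sp(2n)}}(0,2r)$: by decomposing $\otimes^r V$ into $\Sp(2n)$-irreducibles, the latter dimension equals the number of $n$-symplectic oscillating tableaux of length $2r$ and weight zero, which is already known (via a Sundaram-type bijection) to equal $|X(r,n)|$. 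The two inequalities coincide, so $X(r,n)$ is a basis and the induced functor is faithful, hence an isomorphism.

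The main obstacle I anticipate is the straightening step. One must select the local configuration carefully, track signs in the symplectic Brauer category so that the $\Pf$ relation functions as a genuine reduction rule rather than producing cancellations that reintroduce crossings, and exhibit a well-founded statistic that strictly decreases under every application. Once this local-to-global induction is in place, the dimension count upgrades spanning to a basis and completes the identification of the two categories.
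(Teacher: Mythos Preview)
Your plan matches the paper's architecture exactly: factor $\ev_{\Sp(2n)}$ through the quotient by $\Pf^{(n)}$, straighten every diagram to an $(n+1)$-noncrossing one, and finish with Sundaram's bijection between $(n+1)$-noncrossing matchings and $n$-symplectic oscillating tableaux to match $|X(r,n)|$ with $\dim\Hom_{\T_{\Sp(2n)}}(0,2r)$.

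Two local differences are worth noting. For the vanishing of $\ev_{\Sp(2n)}(\Pf)$, the paper does \emph{not} use your Gram-matrix argument; instead it rewrites $\Pf$ as $(n+1)!\,E(n+1)$ for a central idempotent $E(n+1)$ built recursively via Yang--Baxter $R$-matrices, and then shows $\tr_{n+1}E(n+1)=0$ at $\delta=-2n$, whence the idempotent evaluates to zero. Your direct computation (the Pfaffian of a skew Gram matrix of $2(n+1)$ vectors in a $2n$-dimensional space vanishes) is shorter and equally valid. For termination of the straightening, the paper uses the total number of crossing pairs of strands: the singled-out term is the unique all-crossing matching on the chosen $2(n+1)$ points, and one checks that every other term in $\Pf(f)$ has strictly fewer crossings overall (crossings with the fixed arcs in $f$ never increase, while internal crossings drop). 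So your lexicographic statistic is more elaborate than needed, and the sign worry is moot: in $\dc_{\Sp(2n)}$ the relation $\Pf(f)$ is an \emph{unsigned} sum of diagrams, so the rewrite is simply ``all-crossing $=-(\text{sum of the rest})$''.
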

Remarkably, a well-known result due to Sundaram~\cite{MR2941115}
states that the cardinality of $X(r,n)$ equals the dimension of
$\Hom_{\Sp(2n)}(\bC, \otimes^{2r} V)$.  In fact, Sundaram also
computed the Frobenius character of this representation, and, more
generally, of $\Hom_{\Sp(2n)}\big(W(\mu), \otimes^{r} V\big)$ for any
irreducible representation $W(\mu)$ of the symplectic group
$\Sp(2n)$.

We note that the same categorical setup was employed by Lehrer and
Zhang~\cite{LehrerZhang} to show that $\Pf^{(n)}$ restricted to the
Brauer algebra $\Hom_{\dc_{\Sp(2n)}}(r, r)$ is in fact generated as a
two-sided ideal by a single element.

\section{Fundamental theorems}
\label{sec:fundamental-theorems-Brauer}
The theorems of this section connect the Brauer category with the invariant
theory of the defining representation of the symplectic group by providing
an explicit isomorphism of categories.

First we recall the definition of the Brauer category.  Let $D(r,s)$
be the set of perfect matchings on $[r]\amalg [s]$.  In particular,
$D(r,s)=\emptyset$ if $r+s$ is odd.  An element of $D(r,s)$ is
visualised as a set of $(r+s)/2$ strands drawn in a rectangle, with
$r$ endpoints on the top edge of the rectangle and $s$ endpoints on
the bottom edge.

The composition of two diagrams $x\in D(r,s)$ and $y\in D(s,t)$
is obtained by identifying the points on the
bottom edge of $x$ with the points on the top edge of $y$.
Let $c(x,y)$ be the number of closed loops in the resulting diagram
and let $x\circ y \in D(r,t)$ be the perfect matching obtained
by removing all closed loops.

The Brauer category $\dc_{\Br}$ has objects $\bN$.  Its morphisms
$\Hom_{\dc_{\Br}}(r,s)$ are given by the free $\bZ[\delta]$-module
with basis $D(r,s)$.  The composition of morphisms is defined on
basis elements by
\begin{align}\label{eq:multiply}
  x\cdot y = \delta^{c(x,y)}x\circ y
\end{align}
and extended bilinearly.  The endomorphism algebra
$\Hom_{\dc_{\Br}}(r,r)$ is the Brauer algebra, which will be denoted
by $D_r$ from now on.  Finally, let $\dc_{\Sp(2n)}$ be the
specialisation of $\dc_{\Br}$ obtained by applying the homomorphism
$\bZ[\delta]\rightarrow\bC$, $\delta\mapsto -2n$.

The Brauer category also has a tensor product.  The tensor product
$x\otimes y$ of two diagrams $x$ and $y$ is obtained by putting the
diagrams side-by-side, identifying the right hand edge of $x$ with
the left hand edge of $y$.

We now turn to the definition of the category of invariant tensors
$\T_{\Sp(2n)}$.  Recall that the symplectic group $\Sp(V)$ is the
group of linear transformations of a $2n$-dimensional complex vector
space $V$ that preserve a non-degenerate skew-symmetric bilinear form
$\langle\;,\;\rangle$.  However, the categorical machinery we use
requires that a \emph{symmetric} bilinear form is preserved.
Therefore, we regard $V$ as an \emph{odd} vector space, which has
precisely the desired effect.  Among the further consequences we
remark that in this convention the dimension of $V$ is $-2n$ and
symmetric and exterior powers are interchanged.

Let $\{b_1,\dotsc,b_{2n}\}$ be a basis of $V$ and let $\{\bar
b_1,\dotsc,\bar b_{2n}\}$ be the dual basis, so $\langle \bar
b_i,b_j\rangle = \delta_{i,j}$.  Let $\T_{\Sp(2n)}$ be the category
whose objects are again the natural numbers and whose morphisms
$\Hom_{\T_{\Sp(2n)}}(r,s)$ are the equivariant maps
\[
\Hom_{{\Sp(2n)}}(\otimes^r V, \otimes^s V) %
= \{\phi: \otimes^r V\to \otimes^s V \mid g\cdot \phi(\mathbf v) %
= \phi(g\cdot \mathbf v), g\in\Sp(2n)\}.
\]
Note that the invariant subspace of $\otimes^r V$ is precisely
$\Hom_{\T_{\Sp(2n)}}(0,r) = \Hom_{{\Sp(2n)}}(\bC, \otimes^r V)$.

The connection between $\dc_{\Sp(2n)}$ and $\T_{\Sp(2n)}$ is
established by a functor we now define:
\begin{defn}\label{defn:ev}
  Let $\ev_{\Sp(2n)}$ be the evaluation functor
  $\dc_{\Sp(2n)}\rightarrow \T_{\Sp(2n)}$.  Explicitly,
  $\ev_{\Sp(2n)}$ sends the object $r\in\bN$ to $\otimes^rV$.  It is
  defined on the generators by
  \begin{align*}
    &\ev_{\Sp(2n)}\left(
      \begin{tikzpicture}[scale=0.6, baseline={(0,0.2)}, line width = 2pt]
        \fill[color=blue!20] (0,0) rectangle  (2,1);
        \draw (0.5,0) .. controls (0.5,0.5) and (1.5,0.5) .. (1.5,1);
        \draw (1.5,0) .. controls (1.5,0.5) and (0.5,0.5) .. (0.5,1);
      \end{tikzpicture}
    \right) = u\otimes v\mapsto -v\otimes u\\
    &\ev_{\Sp(2n)}\left(
      \begin{tikzpicture}[scale=0.6, baseline={(0,0.2)}, line width = 2pt]
        \fill[color=blue!20] (0,0) rectangle (2,1); %
        \draw (0.5,0) arc (180:0:0.5);
      \end{tikzpicture}
    \right) = 1 \mapsto \sum_i b_i\otimes \bar b_i\\
    &\ev_{\Sp(2n)}\left(
      \begin{tikzpicture}[scale=0.6, baseline={(0,0.2)}, line width = 2pt]
        \fill[color=blue!20] (0,0) rectangle (2,1); %
        \draw (0.5,1) arc (180:360:0.5);
      \end{tikzpicture}
    \right) = u\otimes v \mapsto \langle u, v\rangle.
  \end{align*}
and is then uniquely determined by the following three properties
\begin{itemize}
\item it is a functor, so $\ev_{\Sp(2n)}(x\cdot
  y)=\ev_{\Sp(2n)}(x)\circ\ev_{\Sp(2n)}(y)$,
\item it respects tensor products, so $\ev_{\Sp(2n)}(x\otimes
  y)=\ev_{\Sp(2n)}(x)\otimes\ev_{\Sp(2n)}(y)$,
\item it is linear.
\end{itemize}
\end{defn}

The first fundamental theorem for the symplectic group
can now be stated as follows:
\begin{thm}\cite{MR1503378},\cite[Theorem (6.1A)]{MR1488158}
  \label{thm:FFT-SFT-Brauer}
  For all $n>0$ the functor
  $\ev_{\Sp(2n)}\colon\dc_{\Sp(2n)}\rightarrow \T_{\Sp(2n)}$ is full.
\end{thm}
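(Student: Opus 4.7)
The plan is to reduce fullness of $\ev_{\Sp(2n)}$ to the classical first fundamental theorem of invariant theory for $\Sp(2n)$ (which is exactly the cited result of Brauer and Weyl) and then verify that the diagrammatic translation is correct. I would carry this out in three steps.

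First, I would reduce fullness on arbitrary $\Hom(r,s)$ to fullness on $\Hom(0,r+s)$. On the target side, the symplectic form gives an $\Sp(2n)$-equivariant isomorphism $V\cong V^{*}$, producing the natural adjunction
\[
\Hom_{\Sp(2n)}(\otimes^{r}V,\otimes^{s}V)\cong (\otimes^{r+s}V)^{\Sp(2n)}.
\]
On the diagrammatic side, the cup and cap generators let one bend every input strand of a diagram in $D(r,s)$ into an output strand, producing a diagram in $D(0,r+s)$; this operation intertwines with the adjunction under $\ev_{\Sp(2n)}$ because $\ev_{\Sp(2n)}$ is functorial and tensor-preserving. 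Hence it suffices to prove that for every $m\ge 0$ the evaluation map sends the span of $D(0,2m)$ onto $(\otimes^{2m}V)^{\Sp(2n)}$.

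Second, I would translate invariant tensors into invariant polynomials. Dualising once more and again using $V\cong V^{*}$, the space $(\otimes^{2m}V)^{\Sp(2n)}$ is identified with the multilinear part of the polynomial invariant ring $\bC[V^{\oplus 2m}]^{\Sp(2n)}$. The classical first fundamental theorem~\cite{MR1503378},\cite[Theorem (6.1A)]{MR1488158} asserts that this invariant ring is generated by the symplectic pairings $\langle v_{i},v_{j}\rangle$. Restricting to the multilinear component immediately shows that every multilinear invariant is a linear combination of the products
\[
\prod_{\{i,j\}\in M}\langle v_{i},v_{j}\rangle
\]
indexed by perfect matchings $M$ of $\{1,\dots,2m\}$.

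Third, I would check that, under the two identifications above, the multilinear invariant associated with $M$ is precisely $\ev_{\Sp(2n)}(M)$. This is direct from Definition~\ref{defn:ev}: the cap encodes $\langle\,,\,\rangle$, the cup encodes the inverse tensor $\sum_{i}b_{i}\otimes\bar b_{i}$, and functoriality together with tensor-preservation mean that any presentation of $M$ as a composition of cups, caps and swaps assembles into the claimed product of pairings (the signs introduced by the crossings being absorbed in the odd-vector-space convention adopted by the authors). The only genuine obstacle is the classical first fundamental theorem invoked in the second step; since that is precisely the result being restated in categorical language, I would take it as a black box, and the remaining work is just bookkeeping between the polynomial and tensorial incarnations of the invariant space.
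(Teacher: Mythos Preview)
The paper does not give its own proof of this theorem: it is stated with attributions to Brauer and Weyl and used as input for the rest of the paper. Your proposal is therefore not competing with any argument in the text; rather, you are supplying the standard dictionary that justifies reading the cited classical result in the paper's categorical language. That dictionary---reduce $\Hom(r,s)$ to $\Hom(0,r+s)$ via the pivotal structure, identify $(\otimes^{2m}V)^{\Sp(2n)}$ with the multilinear piece of $\bC[V^{\oplus 2m}]^{\Sp(2n)}$, invoke the polynomial first fundamental theorem to get products of pairings indexed by perfect matchings, and check these are precisely the images of diagrams under $\ev_{\Sp(2n)}$---is correct and is exactly how one unpacks the citation. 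The only point deserving a sentence more of care is the sign bookkeeping in step three: because $V$ is declared odd, the crossing evaluates to $u\otimes v\mapsto -v\otimes u$, and one should verify once that for any two presentations of the same matching as a word in cups, caps and crossings the resulting tensor is the same; this is the usual coherence check for the Brauer category and goes through without surprise.
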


In the remainder of this section we provide an explicit description
of the kernel of $\ev_{\Sp(2n)}$ as an ideal in the Brauer category.
We will denote this ideal with $\Pf^{(n)}$ because of its intimate
connection to the Pfaffian.  Moreover we obtain a simple basis for
the vector space $\Hom_{\dc_{\Sp(2n)}/\Pf^{(n)}}(0, r)$.  This basis
is preserved by rotation, which we will use in
Section~\ref{sec:Brauer-CSP} to exhibit a cyclic sieving phenomenon.

The fundamental object involved is an idempotent $E(n+1)$ of the
Brauer algebra $D_{n+1}$ for which we have
$\ev_{\Sp(2n)}\big(E(n+1)\big) = 0$.  This element can be
characterised as follows.

Let $\rho$ be the one dimensional representation of $D_{n+1}$ which on diagrams
is given by
\begin{equation*}
 \rho(x) = \begin{cases}
 1 & \text{if $\pr(x)=n+1$} \\
 0 & \text{if $\pr(x)<n+1$},
\end{cases}
\end{equation*}
where $\pr(x)$ is the propagating number of a diagram $x$, i.e., the
number of strands connecting a point on the top edge with a point on
the bottom edge.

The element $E(n+1)$ of $D_{n+1}$ is determined, up to scalar multiple, by the properties
\begin{equation}\label{eq:ch}
 xE(n+1)=\rho(x)E(n+1)=E(n+1)x.
\end{equation}
It follows that $E(n+1)$ can be scaled so that it is idempotent and
these properties now determine $E(n+1)$.  It is clear that $E(n+1)$
is a central idempotent and that its rank equals one.

We will give two constructions of $E(n+1)$.  The first is as a simple
linear combination of diagrams.
\begin{defn}\label{defn:id}
 \begin{equation*}
 E(n+1) = \frac{1}{(n+1)!}\sum_{x\in D(n+1,n+1)}x
 \end{equation*}
\end{defn}

\begin{lemma} For $\delta=-2n$, the element $E(n+1)$ in
  Definition~\ref{defn:id} satisfies the properties \eqref{eq:ch}.
\end{lemma}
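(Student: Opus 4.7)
The plan is to reduce the lemma to the single identity $e_1 E(n+1) = 0$ (together with the obvious fact that $E(n+1)$ is invariant under multiplication by permutations) and then to establish this identity by a direct calculation in which the hypothesis $\delta = -2n$ is responsible for the cancellation.

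First, the map $\rho$ is a one-dimensional representation of $D_{n+1}$, since $\pr(xy) < n+1$ whenever $\pr(x) < n+1$ or $\pr(y) < n+1$. Because $D_{n+1}$ is generated as a $\bC$-algebra by the adjacent transpositions $s_i$ and the cup-cap diagrams $e_i$, the identity $xE(n+1) = \rho(x) E(n+1)$ extends from the generators to all $x$ by induction on the length of a factorisation, so it suffices to verify \eqref{eq:ch} for generators. For a permutation $\sigma$, left or right multiplication by $\sigma$ permutes the basis $D(n+1,n+1)$ without creating closed loops, so $\sigma E(n+1) = E(n+1) = E(n+1)\sigma$; this handles the $s_i$. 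For $e_i$ with $i > 1$, the equality $e_i = \sigma e_1 \sigma^{-1}$ for the permutation sending $1,2$ to $i,i+1$, combined with permutation-invariance of $E(n+1)$, reduces the claim to $e_1 E(n+1) = 0$. The right-hand identity $E(n+1) e_i = 0$ then follows by applying the top-bottom reflection, an antiautomorphism of $D_{n+1}$ that fixes $E(n+1)$ and each $e_i$.

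The heart of the proof is therefore the identity $e_1 E(n+1) = 0$. I would group the diagrams $y \in D(n+1,n+1)$ by the partners $\alpha$ of top-$1$ and $\beta$ of top-$2$ in $y$. The cap at the bottom of $e_1$ identifies top-$1$ and top-$2$, so that $\alpha$ and $\beta$ become matched to each other in $e_1 y$. If top-$1$ and top-$2$ are already matched in $y$, the identification closes that edge into a closed loop and $e_1 y = \delta \cdot y$. Otherwise no loop is created, and $e_1 y$ equals the diagram $z$ obtained by placing a cup at top positions $1,2$, joining $\alpha$ to $\beta$ on the remaining $2n$ points, and leaving the rest of $y$ intact.

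For a fixed target diagram $z$ with a cup at top positions $1,2$, let $m_z$ denote its $n$-edge restriction to the remaining $2n$ points. The first case contributes $z$ exactly once with coefficient $\delta$ (taking $y = z$); the second case contributes once for each edge $\{p,q\}$ of $m_z$ and each of the two orientations $(\alpha,\beta) = (p,q)$ or $(q,p)$, giving $2n$ contributions of coefficient $1$. The total coefficient of $z$ in $(n+1)!\, e_1 E(n+1)$ is therefore $\delta + 2n$, which vanishes by hypothesis. The main obstacle is precisely this bookkeeping, in particular the observation that each edge of $m_z$ lifts to exactly two preimages $y$; everything else is a formal consequence.
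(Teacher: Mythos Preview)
Your proof is correct and follows essentially the same approach as the paper: reduce to the generators, observe that permutations act by permuting the diagrams, and for the cup--cap generator count that each target diagram receives one contribution of $\delta$ (from the diagram already containing the arc) and $2n$ contributions of $1$ (two for each of the remaining $n$ edges), giving total coefficient $\delta+2n=0$. The only cosmetic differences are that the paper works directly with $u_i$ for arbitrary $i$ rather than conjugating to $e_1$, and dismisses the right-multiplication case as ``similar'' where you invoke the top--bottom reflection explicitly.
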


\begin{proof} If $\pr(x)=n+1$ then $x$ acts as a permutation on the
  set of diagrams $D(n+1,n+1)$, so this case is clear.

  Let $\cJ_r(p)$ be the ideal of the Brauer algebra $D_r$ generated
  by the diagrams with propagating number at most $p$.  The elements
  $u_i$, $1\le i\le n$ generate the ideal $\cJ_{n+1}(n)$ so it is
  sufficient to show that $u_i E(n+1)=0$ and $E(n+1)u_i=0$ for $1\le
  i\le n$. We will now show that $u_i E(n+1)=0$. The case
  $E(n+1)u_i=0$ is similar.

  It is clear that $u_i E(n+1)$ is a linear combination of diagrams
  which contain the pair $(i,i+1)$. Hence it is sufficient to show
  that the coefficient of each of these diagrams is 0. Let $x$ be a
  diagram which contains the pair $(i,i+1)$.  The set $\{y\in
  D(n+1,n+1): u_i y = x\}$ has cardinality $2n$ and there is
  precisely one diagram $z$ with $u_i z = \delta x$.  Therefore the
  coefficient of $x$ is $\delta+2n$, which vanishes for $\delta=-2n$.

The set of $2n$ diagrams is constructed as follows. The diagram $x$ contains
$n$ pairs other than $(i,i+1)$. For each of these pairs we construct two elements
of the set. Take the pair $(u,v)$, and replace the two pairs $(i,i+1)$ and $(u,v)$
by $(i,u)$ and $(i+1,v)$ and by $(i+1,u)$ and $(i,v)$ keeping all the remaining pairs.
\end{proof}

However it is not clear from this construction that
$\ev_{\Sp(2n)}\big(E(n+1)\big) = 0$.  We now give an alternative
construction which does make this clear.

\begin{defn}
  For $1\le i\le n$ define $u_i\in D(n+1,n+1)$ to consist of the
  pairs $(a,a')$ for $a\notin\{i,i+1\}$ together with the pairs
  $(i,i+1)$ and $(i',(i+1)')$ and define $s_i\in D(n+1,n+1)$ to consist
  of the pairs $(a,a')$ for $a\notin\{i,i+1\}$ together with the
  pairs $(i,(i+1)')$ and $(i',i+1)$:

  \begin{equation*}
    u_i = \raisebox{-1.25cm}{%
    \begin{tikzpicture}[line width = 2pt]
      \fill[color=blue!20] (0,0) rectangle  (5,1.5);
      \draw (1,0) node[anchor=north] {$i-1$} -- (1,1.5) {};
      \draw (2,0) arc (180:0:0.5);
      \draw (2,1.5) arc (180:360:0.5);
      \draw (4,0) node[anchor=north] {$n-i-1$} -- (4,1.5) {};
    \end{tikzpicture}}
  \quad
    s_i = \raisebox{-1.25cm}{%
    \begin{tikzpicture}[line width = 2pt]
      \fill[color=blue!20] (0,0) rectangle  (5,1.5);
      \draw (1,0) node[anchor=north] {$i-1$} -- (1,1.5) {};
      \draw (2,0) .. controls (2,0.75) and (3,0.75) .. (3,1.5);
      \draw (3,0) .. controls (3,0.75) and (2,0.75) .. (2,1.5);
      \draw (4,0) node[anchor=north] {$n-i-1$} -- (4,1.5) {};
    \end{tikzpicture}}
  \end{equation*}
\end{defn}

\begin{defn} For $k\in \bZ$ and $1\le i\le n$ define $R_i(k)\in D_{n+1}$ by
\begin{equation*}
 R_i(k) = \frac1{k+1}\big( 1 + ks_i - \frac{2k}{\delta+2k-2}u_i \big)
\end{equation*}
\end{defn}

\begin{prop}\label{prop:yb} These elements satisfy
\begin{align*}
 R_i(h)R_{i+1}(h+k)R_i(k)&= R_{i+1}(k)R_i(h+k)R_{i+1}(h) \\
 R_i(h)R_j(k)&=R_j(k)R_i(h)\qquad\text{for $|i-j|>1$}
\end{align*}
\end{prop}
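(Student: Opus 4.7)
The plan is to verify both identities by reducing to a calculation in a finite-dimensional subalgebra of $D_{n+1}$ and exploiting the standard presentation of the Brauer algebra.

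For the far commutation $R_i(h)R_j(k)=R_j(k)R_i(h)$ when $|i-j|>1$, the argument is immediate: the diagrams $s_i$ and $u_i$ involve only the strands at positions $i$ and $i+1$, while $s_j$ and $u_j$ involve only positions $j$ and $j+1$. Since these index sets are disjoint, the four generators $s_i,u_i,s_j,u_j$ pairwise commute, and so do arbitrary linear combinations of products of them. This yields the second identity with no computation.

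For the braid-like Yang--Baxter relation, I would write
\begin{equation*}
  R_i(k) = \frac{1}{k+1}\Bigl(1 + k\, s_i - \mu(k)\, u_i\Bigr),
  \qquad \mu(k) = \frac{2k}{\delta+2k-2},
\end{equation*}
and expand both sides as rational-function-valued elements of the
subalgebra $A\subset D_{n+1}$ generated by $s_i,s_{i+1},u_i,u_{i+1}$.
This subalgebra is isomorphic to the Brauer algebra $D_3$ and has the
basis of $15$ diagrams in $D(3,3)$. Each side of the Yang--Baxter
equation expands into $27$ monomials in these four generators, and
using the Brauer relations
\begin{equation*}
  s_i^2=1,\quad s_is_{i+1}s_i=s_{i+1}s_is_{i+1},\quad u_i^2=\delta u_i,\quad s_iu_i=u_is_i=u_i,
\end{equation*}
together with the mixed relations
\begin{equation*}
  u_iu_{i+1}u_i=u_i,\quad u_{i+1}u_iu_{i+1}=u_{i+1},\quad s_is_{i+1}u_i=u_{i+1}u_i,\quad u_is_{i+1}s_i=u_iu_{i+1},
\end{equation*}
and their counterparts with $i$ and $i+1$ exchanged, every monomial rewrites as a unique basis diagram in $D(3,3)$. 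The identity then becomes a system of $15$ rational-function identities in $h$, $k$, $\delta$, one per basis diagram.

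The main obstacle is purely bookkeeping: keeping track of the denominators $(h+1)(h+k+1)(k+1)$ and $(\delta+2h-2)$, $(\delta+2(h+k)-2)$, $(\delta+2k-2)$ introduced by the $\mu$-coefficients, and verifying that the coefficient of each of the $15$ diagrams agrees on both sides. To organize this I would stratify the $27$ monomials on each side by the number of $u$-factors: the three purely-$s$ monomials give the classical Yang--Baxter equation for the symmetric group (immediate from $s_is_{i+1}s_i=s_{i+1}s_is_{i+1}$); the monomials with exactly one $u$ reduce via $s_iu_i=u_i$ to a small number of terms whose matching coefficients yields a linear identity in the $\mu$'s; and the remaining terms, with two or three $u$-factors, produce quadratic identities in the $\mu$'s that encode precisely the choice $\mu(k)=2k/(\delta+2k-2)$. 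This pattern---a spectral parameter $k$ enforcing the Baxterization formula---is what forces the particular coefficients in the definition of $R_i(k)$, and working backwards from the case $h=k$ (where many diagrams coincide) provides a useful sanity check before completing the general expansion.
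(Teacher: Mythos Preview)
Your proposal is correct and follows essentially the same approach as the paper's own proof: the paper simply states that the second relation is clear and that the first is ``checked by a direct calculation'' which ``can be carried out using generators and relations or by using a faithful representation.'' Your sketch is a considerably more explicit version of the generators-and-relations route the paper alludes to, working in the copy of $D_3$ generated by $s_i,s_{i+1},u_i,u_{i+1}$ and comparing coefficients on the $15$ basis diagrams.
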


\begin{proof} The second relation is clear. The first relation is known as the Yang-Baxter
equation and is checked by a direct calculation. This calculation can be carried out using
generators and relations or by using a faithful representation.
\end{proof}

\begin{defn} The element $E(n+1)$ is defined recursively by
 \begin{equation*}
 E(n+1) = E(n)R_n(n)E(n)
 \end{equation*}
\end{defn}

There are other equivalent definitions, for example,
\begin{align*}
E(n+1) &= E(n)R_n(n)R_{n-1}(n-1)\dotsc R_1(1) \\
E(n+1) &= R_1(1)R_{2}(2)\dotsc R_n(n)E(n) \\
\end{align*}

The proof that these definitions are equivalent and the proof that
for $\delta=-2n$ this element satisfies the properties \eqref{eq:ch}
are both calculations using the Yang-Baxter equation
Proposition~\ref{prop:yb}.

\begin{prop}\label{prop:ker}
 \begin{equation*}
  \ev_{\Sp(2n)}(E(n+1))=0
 \end{equation*}
\end{prop}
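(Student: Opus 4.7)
The plan is to treat $\phi := \ev_{\Sp(2n)}(E(n+1))$ as an idempotent $\Sp(2n)$-equivariant endomorphism of $\otimes^{n+1} V$ and show that its image is zero. Idempotency is automatic: $\ev_{\Sp(2n)}$ is a functor and $E(n+1)^2 = E(n+1)$, so $\phi^2 = \phi$ as a linear map; any idempotent linear map with zero image is itself zero, so it suffices to kill the image.

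I pin down the image using the defining relations~\eqref{eq:ch}. Since $\rho(s_i) = 1$ and $\rho(u_i) = 0$, these give $s_i E(n+1) = E(n+1) s_i = E(n+1)$ and $u_i E(n+1) = E(n+1) u_i = 0$ for $1 \le i \le n$. Reading Definition~\ref{defn:ev}, together with the fact that $\ev_{\Sp(2n)}$ respects tensor products, the map $\ev_{\Sp(2n)}(s_i)$ is \emph{minus} the transposition of positions $i, i+1$, while $\ev_{\Sp(2n)}(u_i)$ factors as contraction of positions $i, i+1$ using $\langle \cdot, \cdot \rangle$ followed by insertion of $\sum_j b_j \otimes \bar b_j$ there. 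The $s_i$-relations thus force the image of $\phi$ to be antisymmetric under every adjacent transposition, hence contained in the ordinary exterior power $\wedge^{n+1} V$, while the $u_i$-relations force it to lie in the common kernel of all these contractions.

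Restricted to $\wedge^{n+1} V$, antisymmetry makes all these contractions coincide up to sign, so the image of $\phi$ sits in the kernel of a single map $\Lambda \colon \wedge^{n+1} V \to \wedge^{n-1} V$ given by contracting two positions with $\langle \cdot, \cdot \rangle$. This kernel is exactly the primitive part of $\wedge^{n+1} V$ in the Lefschetz $\mathfrak{sl}_2$-decomposition of $\wedge^\bullet V$ attached to the symplectic form, and it is classical that this primitive part vanishes whenever the degree exceeds $n = \tfrac{1}{2}\dim V$; equivalently, $\Sp(2n)$ has no fundamental representation beyond the $n$-th. Hence the image of $\phi$ is zero and $\phi = 0$.

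The main point to keep straight is the super-convention: the sign in $\ev_{\Sp(2n)}(s_i)$ is exactly what lands the image of $\phi$ in the antisymmetric power $\wedge^{n+1} V$ rather than in $S^{n+1} V$, where no analogous vanishing would be available. Apart from carefully translating these signs across $\ev_{\Sp(2n)}$, the argument uses only~\eqref{eq:ch} and the standard symplectic Lefschetz vanishing.
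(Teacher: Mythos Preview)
Your argument is correct and takes a genuinely different route from the paper. The paper does not identify the image of $\phi$ at all; instead it stays inside the diagram category and computes the categorical trace of $E(n+1)$ via the recursive formula $E(n+1)=E(n)R_n(n)E(n)$ together with the trace identities $\tr_{n+1}(\alpha\otimes\id_1)=\delta\tr_n\alpha$ and $\tr_{n+1}\alpha s_n\beta=\tr_{n+1}\alpha u_n\beta=\tr_n\alpha\beta$, obtaining $\tr_{n+1}E(n+1)=\tfrac{1}{n+1}(\delta+2n)\tr_n E(n)$, which vanishes at $\delta=-2n$; since $\ev_{\Sp(2n)}$ preserves traces and the trace of an idempotent is its rank, $\phi=0$. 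Your proof bypasses the $R$-matrix construction entirely: from~\eqref{eq:ch} alone you read off that $\mathrm{im}\,\phi$ lies in the ordinary exterior power $\wedge^{n+1}V$ (using the sign in $\ev_{\Sp(2n)}(s_i)$ correctly) and in the kernel of the symplectic contraction, and then invoke the standard Lefschetz vanishing $\ker\Lambda\cap\wedge^{n+1}V=0$ for a $2n$-dimensional symplectic space. The paper's computation is self-contained within the Brauer category and uses no representation theory of $\Sp(2n)$ beyond the definition of $\ev_{\Sp(2n)}$; yours is shorter and more conceptual but imports the primitive decomposition of $\wedge^\bullet V$ as a black box. Two small remarks: the idempotency of $\phi$ is not actually needed in your reduction (zero image already means zero map), and you should note explicitly that the insertion $1\mapsto\sum_j b_j\otimes\bar b_j$ is injective, so that $\ev_{\Sp(2n)}(u_i)\circ\phi=0$ really does force the contraction itself to annihilate $\mathrm{im}\,\phi$.
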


\begin{proof}
  Because $\ev_{\Sp(2n)}$ is a functor, $\ev_{\Sp(2n)}(E(n+1))$ is an
  idempotent in $\T_{\Sp(2n)}$, too.  Moreover, $\ev_{\Sp(2n)}$
  preserves the trace of morphisms.  Recall
  that the (diagrammatic) trace $\tr_n$ of a diagram $\alpha$ in
  $D_n$ is defined as
  \[
  \eta_{2n} \cdot (\alpha\otimes\id_n) \cdot \eta^\ast_{2n}
  \]
  where $\eta_{2n}\in D(0,2n)$ is the diagram that consists of $n$
  nested arcs.  The following properties are easily verified.
  \begin{align*}
    \tr_{n+1}(\alpha\otimes\id_1) &= \delta \tr_n \alpha \\
    \tr_{n+1} \alpha s_n\beta &=\tr_n \alpha \beta\\
    \tr_{n+1} \alpha u_n \beta &= \tr_n \alpha \beta
  \end{align*}
  for $\alpha, \beta$ diagrams on $n$ strings.

  The rank of an idempotent is equal to its trace so it is sufficient
  to show that $\tr_{n+1} E(n+1)=0$ for $\delta=-2n$.

  We now compute the trace by expanding $R_{n}(n) = \frac{1}{n+1}(1+n
  s_n+n u_n)$:
  \[
  \tr_{n+1} E(n+1) %
  = \frac{1}{n+1}\left(\delta + n + n\right) %
  \tr_{n+1} E(n).
  \]
  Substituting $-2n$ for $\delta$ gives $\tr_{n+1} E(n+1)=0$.
\end{proof}

We will now describe the kernel of $\ev_{\Sp(2n)}$ in terms of
diagrammatic Pfaffians.
\begin{defn}[\protect{\cite[Definition 3.4 (b)]{MR1670662}}]\label{dfn:Pf-diagrammatic}
  Let $\Set S$ be a $2(n+1)$-element subset of $[r]\amalg [s]$ and
  let $f$ be a perfect matching of $[r]\amalg [s]\setminus\Set S$.
  Then the \Dfn{diagrammatic Pfaffian of order $2(n+1)$ corresponding
    to $f$} is
  \[
  \Pf(f) = \sum_{\text{$s$ a perfect matching of $\Set S$}} (s\cup f).
  \]
\end{defn}

Note that for $r,s=n+1$ and $f=\emptyset$ we have, by Definition~\ref{defn:id},
$E(n+1) = \frac1{(n+1)!} \Pf(\emptyset)$.

\begin{defn}\label{dfn:Pf-ideal}
  For $r,s\ge 0$, the subspace $\Pf^{(n)}(r,s)$ in the ideal
  $\Pf^{(n)}$ of $\dc_{\Sp(2n)}$ is spanned by the set
  \[
    \Pf^{(n)}(r,s) = \{ \Pf(f): \text{$f$ a perfect matching of a
      subset of $[r]\amalg [s]$}\\\text{ of cardinality $r+s-2(n+1)$
    } \}.
  \]
\end{defn}

\begin{defn}
  Let $\bev_{\Sp(2n)}\colon\dc_{\Sp(2n)}/\Pf^{(n)}\rightarrow
  \T_{\Sp(2n)}$ be the functor that factors $\ev_{\Sp(2n)}$ through
  the quotient.
\end{defn}

We can now state the main theorem of this section, also known as the
second fundamental theorem for the symplectic group:
\begin{thm}\label{thm:sft}
  The functor $\bev_{\Sp(2n)}\colon\dc_{\Sp(2n)}/\Pf^{(n)}\rightarrow
  \T_{\Sp(2n)}$ is an isomorphism of categories.
\end{thm}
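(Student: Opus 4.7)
The plan is to verify that $\bev_{\Sp(2n)}$ is a bijection on objects and an isomorphism on every morphism space. Identification on objects is built into the construction, and fullness is inherited from Theorem~\ref{thm:FFT-SFT-Brauer} by passing to the quotient. The entire content of the theorem is therefore faithfulness, i.e., the equality $\Pf^{(n)}(r,s)=\ker\ev_{\Sp(2n)}(r,s)$ for all $r,s\ge 0$.

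First I would establish the easy inclusion $\Pf^{(n)}\subseteq\ker\ev_{\Sp(2n)}$. Up to the scalar $(n+1)!$, $\Pf(\emptyset)$ on $2(n+1)$ points is exactly $E(n+1)$ by Definition~\ref{defn:id}, so Proposition~\ref{prop:ker} gives $\ev_{\Sp(2n)}(\Pf(\emptyset))=0$. An arbitrary generator $\Pf(f)$ of $\Pf^{(n)}(r,s)$ can, after reordering the strings by pre- and post-composition with permutation diagrams, be written as $\Pf(\emptyset)\otimes f$; since the kernel of a strict monoidal functor is closed under composition and tensor product, $\Pf(f)\in\ker\ev_{\Sp(2n)}$.

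The heart of the proof is the opposite inclusion, which I would reduce to showing that $\Hom_{\dc_{\Sp(2n)}/\Pf^{(n)}}(r,s)$ is spanned by the set of $(n+1)$-noncrossing perfect matchings of $[r]\amalg[s]$. This I would obtain from a straightening procedure: whenever a matching $x$ contains $n+1$ mutually crossing strands $(a_1,b_1),\dots,(a_{n+1},b_{n+1})$ with $a_1<\dots<a_{n+1}<b_1<\dots<b_{n+1}$, set $\Set S=\{a_1,\dots,a_{n+1},b_1,\dots,b_{n+1}\}$, let $f$ be the restriction of $x$ to the complement, and invoke $\Pf(f)=0$ in $\dc_{\Sp(2n)}/\Pf^{(n)}$ to rewrite the forbidden pairing of $\Set S$ as minus the sum of all other pairings of $\Set S$ combined with $f$. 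To guarantee termination one needs a monovariant on matchings that is strictly decreased by each such rewrite; a natural candidate is the reverse-lexicographic vector of the numbers of $k$-crossings, though verifying its compatibility with the Pfaffian relation requires care because new mutual crossings can arise between the re-paired strands of $\Set S$ and the fixed strands of $f$.

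The last step is a dimension count. By Sundaram's theorem~\cite{MR2941115}, $\dim\Hom_{\T_{\Sp(2n)}}(r,s)=\dim\Hom_{\Sp(2n)}(\bC,\otimes^{r+s}V)=|X((r+s)/2,n)|$, which equals the number of $(n+1)$-noncrossing perfect matchings of $[r]\amalg[s]$. Combined with fullness of $\bev_{\Sp(2n)}$ and the spanning statement above, this forces the noncrossing matchings to constitute a basis of the quotient hom-space and $\bev_{\Sp(2n)}$ to be injective, finishing the proof. I expect the main obstacle to be precisely the straightening step: identifying a monovariant for which every term produced by the Pfaffian relation is strictly smaller---or otherwise organising the rewriting so that it terminates---is the genuinely combinatorial core of the argument.
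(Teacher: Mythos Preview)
Your outline is essentially the paper's proof: bijective on objects, full by Theorem~\ref{thm:FFT-SFT-Brauer}, and then a dimension comparison via a spanning set of $(n+1)$-noncrossing matchings in the quotient together with Sundaram's count on the invariant side (the paper routes the latter through $n$-symplectic oscillating tableaux and Sundaram's bijection, Lemmas~\ref{lem:Brauer-basis-oscillating-tableaux} and~\ref{lem:Brauer-Sundaram}, which amounts to the same thing).

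The one point where you hesitate is exactly where the paper is terse: termination of the straightening. The paper's monovariant is simply the total number of crossing pairs of strands, and this does strictly decrease; your worry about new crossings between re-paired $\Set S$-strands and the fixed strands of $f$ is unfounded. The reason is that for any strand $(c,d)$ of $f$, the set $\Set S\cap(c,d)$ is a \emph{contiguous} block of $\Set S=\{a_1<\dots<a_{n+1}<b_1<\dots<b_{n+1}\}$, and one checks directly that the all-crossing matching $(a_i,b_i)_i$ already realises the maximum possible number of pairs with exactly one endpoint in that block (namely $\min(|\Set S\cap(c,d)|,\,|\Set S\setminus(c,d)|)$). Hence crossings with each $f$-strand can only weakly decrease, while the $\binom{n+1}{2}$ internal crossings strictly decrease, so the total drops. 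Your proposed reverse-lexicographic vector of $k$-crossing counts is therefore unnecessary; the scalar crossing number already does the job.
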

\begin{proof}
  Since $\bev_{\Sp(2n)}$ is obviously bijective on objects and full by
  the first fundamental theorem, it is sufficient to show
  \[
  \dim\Hom_{\dc_{\Sp(2n)}/{\Pf^{(n)}}}(r,s)\leq \dim
  \Hom_{\T_{\Sp(2n)}}(r,s).
  \]
  This is achieved by combining Lemma~\ref{lem:Brauer-iso},
  Lemma~\ref{lem:Brauer-basis-oscillating-tableaux},
  Theorem~\ref{thm:Brauer-basis} and Lemma~\ref{lem:Brauer-Sundaram}
  below.
\end{proof}
We first restrict our attention to the invariant tensors:
\begin{lemma}\label{lem:Brauer-iso}
  We have isomorphisms of vector spaces
  \begin{align*}
    \Hom_{\dc_{\Sp(2n)}/{\Pf^{(n)}}}(r,s) &\cong \Hom_{\dc_{\Sp(2n)}/{\Pf^{(n)}}}(0,r+s)\\
    \Hom_{\T_{\Sp(2n)}}(r,s) &\cong \Hom_{\T_{\Sp(2n)}}(0,r+s).
  \end{align*}
\end{lemma}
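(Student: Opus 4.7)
The plan is to realise both isomorphisms as the standard ``straightening'' maps that exist because every object is self-dual in both $\dc_{\Sp(2n)}$ and $\T_{\Sp(2n)}$.

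For the first isomorphism, recall that $\Hom_{\dc_{\Sp(2n)}}(r,s)$ has the basis $D(r,s)$ of perfect matchings of $[r]\amalg[s]$. After the canonical relabelling that reads the $r+s$ boundary points in cyclic order around the rectangle (say, $1,\dots,s$ along the bottom left-to-right, then $s+1,\dots,s+r$ along the top right-to-left), this set is exactly $D(0,r+s)$. The resulting linear isomorphism
\[
\Hom_{\dc_{\Sp(2n)}}(r,s)\cong \Hom_{\dc_{\Sp(2n)}}(0,r+s)
\]
is realised diagrammatically by composing on top with $r$ nested cups, and is inverted by composing with $r$ nested caps. I then observe that this bijection carries the spanning set of $\Pf^{(n)}(r,s)$ from Definition~\ref{dfn:Pf-ideal} bijectively onto that of $\Pf^{(n)}(0,r+s)$: the diagrammatic Pfaffian $\Pf(f)$ of Definition~\ref{dfn:Pf-diagrammatic} is defined purely in terms of which points are matched by $f$ and of the subset $\Set{S}$ over which the inner sum ranges, with no reference to the top/bottom partition. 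Hence the isomorphism descends to the quotients.

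For the second isomorphism I invoke the tensor--hom adjunction. Since $V$ is self-dual (the skew form, with the signs arising from treating $V$ as odd, provides an $\Sp(2n)$-invariant isomorphism $V\cong V^*$), there are natural isomorphisms
\[
\Hom_{\Sp(2n)}(\otimes^r V,\otimes^s V) \cong \Hom_{\Sp(2n)}\bigl(\bC,(\otimes^r V)^*\otimes \otimes^s V\bigr) \cong \Hom_{\Sp(2n)}(\bC,\otimes^{r+s} V).
\]
Concretely, the first map sends $\phi$ to $(\phi\otimes\id)\circ\ev_{\Sp(2n)}(\cup_r)$, where $\cup_r\in D(0,2r)$ is the diagram of $r$ nested arcs; this is precisely the image under $\ev_{\Sp(2n)}$ of the bending operation used on the Brauer side, so the two isomorphisms will automatically commute with $\bev_{\Sp(2n)}$, which is needed for the application in Theorem~\ref{thm:sft}.

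The proof is essentially formal once both categories are recognised as rigid symmetric monoidal with every object self-dual. The only ingredient beyond abstract duality is compatibility of the bending bijection with $\Pf^{(n)}$, and this is almost tautological. The only point requiring mild attention is the sign coming from the ``$V$ as odd'' convention, but this sign is absorbed uniformly inside $\ev_{\Sp(2n)}$ and therefore does not obstruct either isomorphism.
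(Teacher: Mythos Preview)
Your argument is correct. The paper actually states this lemma without proof, treating it as a routine consequence of the pivotal (self-dual) structure of both categories---precisely the ``bending'' mechanism you spell out. Your check that the bending bijection carries $\Pf^{(n)}(r,s)$ onto $\Pf^{(n)}(0,r+s)$ is the only point that is not entirely formal, and it is indeed immediate from Definitions~\ref{dfn:Pf-diagrammatic} and~\ref{dfn:Pf-ideal}, since those definitions refer only to the underlying set $[r]\amalg[s]$ and never to the top/bottom partition; this is consonant with the authors' footnoted remark that $\Pf^{(n)}$ is a \emph{pivotal} ideal. Your additional observation that the two bending maps intertwine $\bev_{\Sp(2n)}$ is not required for the lemma as stated (only vector-space isomorphisms are claimed, and only dimensions are used in Theorem~\ref{thm:sft}), but it is correct and harmless.
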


Let us recall an indexing set for the basis of
$\Hom_{\T_{\Sp(2n)}}(0,r)$:
\begin{defn}
  An \Dfn{$n$-symplectic oscillating tableau} of length $r$ (and
  final shape $\emptyset$) is a sequence of partitions
  \[
  (\emptyset\!=\!\mu^0,\mu^1,\dots,\mu^r\!=\!\emptyset)
  \]
  such that the Ferrers diagrams of two consecutive partitions differ
  by exactly one cell and every partition $\mu^i$ has at most $n$
  non-zero parts.
\end{defn}

\begin{lemma}\label{lem:Brauer-basis-oscillating-tableaux}
  $\Hom_{\T_{\Sp(2n)}}(0,\otimes^r V)$ has a basis indexed by
  $n$-symplectic oscillating tableaux.
\end{lemma}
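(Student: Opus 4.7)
The plan is to reduce the statement to the symplectic analogue of the Pieri rule for tensoring with the defining representation. By Schur's lemma,
\[
\Hom_{\T_{\Sp(2n)}}(0,r) = \Hom_{\Sp(2n)}(\bC,\otimes^r V)
\]
is the isotypic component of the trivial representation in $\otimes^r V$. So it suffices to compute its dimension in terms of oscillating tableaux, and then exhibit a basis naturally indexed by such tableaux.

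The key input is the symplectic Pieri rule: for any partition $\mu$ with at most $n$ parts,
\[
W(\mu)\otimes V \cong \bigoplus_\nu W(\nu),
\]
where the sum runs over partitions $\nu$ with at most $n$ parts that differ from $\mu$ by adding or removing a single cell. I would cite this directly from Sundaram~\cite{MR2941115}; its proof is a standard consequence of the Weyl character formula, or equivalently of the modification rule applied to the ordinary Pieri coefficients.

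Iterating the Pieri rule starting from $W(\emptyset) = \otimes^0 V$, the multiplicity of an irreducible $W(\lambda)$ in $\otimes^k V$ equals the number of sequences $(\emptyset = \mu^0, \mu^1, \dots, \mu^k = \lambda)$ with each $\mu^i$ having at most $n$ parts and consecutive terms differing by exactly one cell. Specialising to $\lambda = \emptyset$ and $k = r$ recovers the claimed dimension as the number of $n$-symplectic oscillating tableaux of length $r$. To promote this count to an honest basis, one can at each tensoring step choose an $\Sp(2n)$-equivariant projection onto the summand prescribed by the next partition in the tableau, and compose these projections to obtain, for each oscillating tableau, a nonzero element of $\Hom_{\Sp(2n)}(\bC,\otimes^r V)$; linear independence follows because the constructed vectors pass through distinct isotypic components at the intermediate stages.

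The main obstacle is the Pieri rule itself; everything else is an essentially formal iteration. Since the paper already relies on Sundaram's work elsewhere in the proof of Theorem~\ref{thm:sft} (via Lemma~\ref{lem:Brauer-Sundaram}), the cleanest exposition is simply to cite~\cite{MR2941115} for both the branching rule and the resulting enumeration, and to remark that an explicit bijective construction of a basis is provided either by Berele's symplectic insertion or by Sundaram's construction.
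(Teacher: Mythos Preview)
Your proposal is correct and takes essentially the same approach as the paper: the paper's proof is a one-sentence appeal to the branching rule for tensoring an irreducible with the defining representation of $\Sp(2n)$ (citing \cite[Theorem II]{MR0095209}), which is exactly the symplectic Pieri rule you invoke and iterate. Your added paragraph on promoting the dimension count to an explicit basis via successive equivariant projections is a reasonable elaboration, but the paper is content with the dimension statement and does not spell this out.
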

\begin{proof}
  This follows immediately from the branching rule for tensoring the
  defining representation with an irreducible representation of
  $\Sp(2n)$, see \cite[Theorem II]{MR0095209}.
\end{proof}

Next we exhibit a set of diagrams that span
$\Hom_{\dc_{\Sp(2n)}/\Pf^{(n)}}(0, r)$.  In fact, this is the key
observation.
\begin{defn}
  Let $d$ be a diagram in $D(0, r)$.  Then an \Dfn{$n$-crossing} in
  $d$ is a set of $n$ distinct strands such that every pair of
  strands crosses, i.e., $d$ contains strands
  $(a_1,b_1),\dotsc,(a_n,b_n)$ with
  $a_1<a_2<\dotsb<a_n<b_1<b_2<\dotsb<b_n$.  The diagram is
  $(n+1)$-noncrossing if it contains no $(n+1)$-crossing.
\end{defn}

\begin{thm}\label{thm:Brauer-basis}
  The $(n+1)$-noncrossing diagrams form a basis of
  $\Hom_{\dc_{\Sp(2n)}/\Pf^{(n)}}(0, r)$.
\end{thm}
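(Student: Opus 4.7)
The two parts of the theorem are: (1) the $(n+1)$-noncrossing diagrams span $\Hom_{\dc_{\Sp(2n)}/\Pf^{(n)}}(0,r)$, and (2) they are linearly independent. My plan is to establish (1) directly and then obtain (2) from a dimension count. For (2), a Sundaram-type bijection (which is exactly the content of Lemma~\ref{lem:Brauer-Sundaram}) matches the $(n+1)$-noncrossing perfect matchings of $[r]$ with $n$-symplectic oscillating tableaux of length $r$; by Lemma~\ref{lem:Brauer-basis-oscillating-tableaux} these index a basis of $\Hom_{\T_{\Sp(2n)}}(0,r)$. Combined with the fullness of $\bev_{\Sp(2n)}$ from Theorem~\ref{thm:FFT-SFT-Brauer}, this forces the spanning family to have the correct cardinality and hence to be independent.

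For the spanning claim the idea is to use the generators of $\Pf^{(n)}$ directly to reduce any diagram with too many mutually crossing strands. Suppose $d \in D(0,r)$ contains an $(n+1)$-crossing, i.e. strands $(a_1, b_1), \dotsc, (a_{n+1}, b_{n+1})$ with $a_1 < \dotsb < a_{n+1} < b_1 < \dotsb < b_{n+1}$. Let $\Set S = \{a_1, \dotsc, a_{n+1}, b_1, \dotsc, b_{n+1}\}$ and let $f$ be the restriction of $d$ to a perfect matching of $[r] \setminus \Set S$. By Definition~\ref{dfn:Pf-ideal} we have $\Pf(f) \in \Pf^{(n)}(0,r)$, so modulo $\Pf^{(n)}$
\[
  d \;\equiv\; -\sum_{s \ne s_0} (s \cup f),
\]
where $s_0 = \{(a_i, b_i)\}_{i=1}^{n+1}$ is the maximally-crossed matching on $\Set S$ and the sum runs over the remaining perfect matchings of $\Set S$. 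Iterating this rewriting in a well-founded order should terminate in a linear combination of $(n+1)$-noncrossing diagrams.

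The main obstacle is choosing the monovariant that makes this induction terminate. A naive statistic---total number of crossings, or the number $N_{n+1}(d)$ of $(n+1)$-crossings---is not quite monotone: while every $s \ne s_0$ has strictly fewer internal crossings than $s_0$ (and in fact, since $|s|=n+1$, $s$ itself cannot contain an $(n+1)$-crossing unless $s=s_0$), strands of $s$ may combine with strands of $f$ to create new $(n+1)$-crossings that were absent in $d$. A successful monovariant must therefore track how the new $\Set S$-strands interact with $f$. Two natural candidates are: a lexicographic statistic on the endpoint list $(a_1, \dotsc, a_{n+1}, b_1, \dotsc, b_{n+1})$ of a canonically chosen $(n+1)$-crossing, together with a verification that every surviving $(n+1)$-crossing in each $s \cup f$ is strictly later in this order; or a growth/jeu-de-taquin argument \`a la Sundaram that constructs the $(n+1)$-noncrossing expansion of $d$ directly from an oscillating-tableau labelling. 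I expect the bulk of the technical work to reside in making this termination argument airtight; everything else is bookkeeping around the Pfaffian relation.
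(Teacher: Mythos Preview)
Your overall strategy matches the paper's exactly: the paper's proof of this theorem establishes only the spanning statement, via precisely the Pfaffian rewrite rule you describe, and linear independence is then obtained (as you outline) from Lemma~\ref{lem:Brauer-Sundaram}, Lemma~\ref{lem:Brauer-basis-oscillating-tableaux} and fullness of $\ev_{\Sp(2n)}$.

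Where you diverge is in the termination argument, and here you are making life harder than necessary. The paper uses exactly the ``naive'' monovariant you dismiss: the total number of pairs of crossing strands. Contrary to your claim, this \emph{does} strictly decrease at every rewriting step. The observation you are missing is that the maximally-crossed matching $s_0$ on $\Set S$ maximises not only the number of internal crossings, but also the number of crossings with \emph{each individual strand} of $f$. Indeed, for a fixed strand $(x,y)\in f$ the points of $\Set S$ lying in the open interval $(x,y)$ form a contiguous block of $\Set S$, say of size $k$; any perfect matching of $\Set S$ has at most $\min\big(k,2(n+1)-k\big)$ strands crossing $(x,y)$, and one checks directly that the pairing $p_\ell\leftrightarrow p_{\ell+n+1}$ (where $p_1<\dots<p_{2(n+1)}$ enumerate $\Set S$) always attains this maximum. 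Summing over all strands of $f$ and adding the strictly smaller internal contribution of any $s\neq s_0$ yields the strict decrease. Your concern about newly created $(n{+}1)$-crossings is valid for the statistic $N_{n+1}$, but irrelevant for the total crossing count; no lexicographic refinement or jeu-de-taquin is needed.
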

\begin{proof}
  We only need to show that the set spans.  For each $f$ we write
  $\Pf(f)$ as a rewrite rule. The term that is singled out is the
  perfect matching of $\Set S$ in which every pair of strands
  crosses. The diagrams which cannot be simplified using these
  rewrite rules are the $(n+1)$-noncrossing diagrams.  The procedure
  terminates because the number of pairs of strands which cross
  decreases.
\end{proof}

We can now use a bijection due to Sundaram~\cite[Lemma 8.3]{MR2941115} to finish
the proof of our main theorem.
\begin{lemma}\label{lem:Brauer-Sundaram}
  For all $n$ and $r$ there is a bijection between the set of
  $n$-symplectic oscillating tableaux of length $r$ and the set of
  $(n+1)$-noncrossing diagrams in $D(0, r)$.
\end{lemma}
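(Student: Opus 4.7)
The plan is to implement Sundaram's bijection~\cite{MR2941115}, an oscillating analog of the Robinson--Schensted correspondence. Starting from the oscillating tableau $(\emptyset=\mu^0,\mu^1,\ldots,\mu^r=\emptyset)$, I would recursively build a pair $(T_i,M_i)$, where $T_i$ is a standard Young tableau of shape $\mu^i$ and $M_i$ is a partial matching of a subset of $\{1,\ldots,i\}$, starting with $(T_0,M_0)=(\emptyset,\emptyset)$. At step $i$: if $\mu^i/\mu^{i-1}$ is a single cell, insert $i$ into $T_{i-1}$ to obtain $T_i$ and set $M_i=M_{i-1}$; if $\mu^{i-1}/\mu^i$ is a single cell $c$, perform reverse insertion starting from $c$, which ejects some entry $j<i$, and set $T_i$ to the result and $M_i=M_{i-1}\cup\{(j,i)\}$. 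Since $\mu^r=\emptyset$, the terminal $M_r$ is a perfect matching of $\{1,\ldots,r\}$.

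To see bijectivity, I would spell out the inverse. Given a perfect matching of $\{1,\ldots,r\}$, process positions in increasing order: if $i$ is the smaller endpoint of its pair (an \emph{opener}), insert $i$ into the current tableau; if $i$ is the larger endpoint and its partner is $j<i$, locate the unique outer corner whose reverse insertion ejects $j$ and remove it. Reversibility of (reverse) insertion makes this the step-by-step inverse of the map above, giving a bijection between all oscillating tableaux (with no row bound) and all perfect matchings (with no crossing bound).

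The heart of the proof is to match up the two constraints. By the theorem of Chen--Deng--Du--Stanley--Yan on crossings and nestings, transported through Sundaram's correspondence, the maximum $k$ such that $M_r$ contains a $k$-crossing equals the maximum number of nonzero parts appearing among the shapes $\mu^0,\mu^1,\ldots,\mu^r$. Under this identification, the hypothesis that every $\mu^i$ has at most $n$ parts translates precisely into the statement that $M_r$ is $(n+1)$-noncrossing, and the restricted bijection is the one claimed.

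The main obstacle is this final step: whether it is \emph{crossings} (rather than nestings) that correspond to shape rows depends on the exact flavour of insertion used (row vs.\ column) and on whether the inserted or the bumped entry is recorded as the endpoint of the new arc. I would pin down these conventions so that $k$-crossings track the length of the first column of $\mu^i$, by examining how a candidate family of mutually crossing opener--closer pairs manifests as a chain in the insertion tableau; if the naïve convention produces the \emph{nesting} correspondence instead, composing with the standard involution that interchanges crossings and nestings at each shape yields the desired bijection.
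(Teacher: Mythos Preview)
The paper does not actually prove this lemma; it simply cites Sundaram~\cite[Lemma~8.3]{MR2941115}. Your proposal is exactly an unpacking of that citation, augmented by the Chen--Deng--Du--Stanley--Yan result that identifies the maximal number of rows among the $\mu^i$ with the maximal crossing in the associated matching. So at the level of strategy you are doing precisely what the paper appeals to, only with more detail.

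There is, however, a genuine slip in your description of the inverse. Processing left to right, at an opener $i$ you propose to ``insert $i$ into the current tableau''. But $i$ is strictly larger than every entry already present, so RSK row-insertion always appends it to the end of row~$1$; you can therefore never reconstruct an oscillating tableau whose shape grows in a row below the first. Concretely, for the matching $\{(1,3),(2,4)\}$ your procedure yields shapes $\emptyset,(1),(2),\dots$, whereas the correct oscillating tableau is $\emptyset,(1),(1,1),(1),\emptyset$; at step $i=3$ there is then no outer corner of $[1\,2]$ whose reverse bump ejects $1$, and the algorithm stalls. The standard fix is to run the inverse from $i=r$ down to $i=1$, starting with the empty tableau: at a closer $i$ with partner $j$, RSK-insert $j$ (this adds a box, recording $\mu^{i-1}\supset\mu^i$); at an opener $i$, delete the cell containing $i$, which is necessarily the largest entry and hence in a corner (recording $\mu^{i-1}\subset\mu^i$). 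Equivalently, one may phrase both directions uniformly via Fomin's growth diagrams.

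With that repair your argument is correct, and your hedge about row versus column insertion is well placed: with row insertion as above, the maximal number of rows equals the maximal crossing, which is exactly the statement you need.
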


\section{Cyclic sieving phenomenon}
\label{sec:Brauer-CSP}

We now use the results obtained so far to exhibit instances of the
cyclic sieving phenomenon:
\begin{defn}
  Let $X$ be a finite set and let $\langle\rho\rangle$ be a cyclic
  group of order $r$ acting on $X$.  Let $P(q)$ be a polynomial with
  non-negative integer coefficients such that
  \[
  P(\zeta^d) = \left|\{x\in X\mid \rho^d x = x\}\right|,
  \]
  where $\zeta$ is a primitive $r$-th root of unity.  Then the triple
  \[
  \big(X, \rho, P(q)\big)
  \]
  exhibits the \Dfn{cyclic sieving phenomenon}.
\end{defn}

Recall that if $\rho\colon \fS_r\rightarrow \End(U)$ is a representation
of $\fS_r$ then the Frobenius character $\ch(U)$ is the homogeneous symmetric function
of degree $r$ given by
\begin{equation*}
 \ch(U) = \frac 1{r!} \sum_{\pi\in\fS_r} \tr \rho(\pi) p_{\lambda(\pi)}
\end{equation*}
where $p_\lambda$ is the power sum symmetric function and $\lambda(\pi)$
is the cycle type of $\pi$.

The fake degree, $\fd$, is a linear map from symmetric functions to
polynomials in $q$.  On the basis of Schur functions it is given by
\begin{equation*}
 \fd (s_\lambda) = \sum_T q^{\maj(T)}
\end{equation*}
where the sum is over standard tableaux of shape $T$ and $\maj(T)$ is the
major index of the tableau $T$.

A general technique to obtain a cyclic sieving polynomial $P(q)$ is
provided by the following result from~\cite{1512}:
\begin{thm}\label{cor:spr}
  Let $U$ be a representation of $\fS_r$ and let $X\subset U$ be a
  basis which is permuted by the long cycle $c$.  Then $\big(X,c,P(q)\big)$
exhibits the cyclic sieving phenomenon, where
\begin{equation*}
   P(q)= \fd\ch(U).
\end{equation*}
\end{thm}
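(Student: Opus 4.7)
The plan is to show $P(\zeta^d)=\#\{x\in X:c^d x=x\}$ by first rewriting the right-hand side as a trace, then decomposing $U$ into irreducibles to reduce to a single classical identity per irreducible.

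First, I would use the fact that $c$ permutes the basis $X$ setwise. This makes the matrix of $c^d$ in the basis $X$ a permutation matrix, and consequently the number of fixed points of $c^d$ on $X$ coincides with the trace of $c^d$ acting on $U$:
\[
\#\{x\in X:c^d x=x\}=\tr\rho(c^d).
\]
Thus it suffices to prove $P(\zeta^d)=\tr\rho(c^d)$.

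Next, I would decompose $U=\bigoplus_\lambda m_\lambda S^\lambda$ into irreducible $\fS_r$-modules. By linearity of $\ch$ and $\fd$ together with the formulas stated just before the theorem,
\[
P(q)=\fd\,\ch(U)=\sum_\lambda m_\lambda\sum_{T\in\mathrm{SYT}(\lambda)}q^{\maj(T)},\qquad \tr\rho(c^d)=\sum_\lambda m_\lambda\,\tr_{S^\lambda}(c^d).
\]
The theorem therefore reduces to the per-irreducible identity
\[
\sum_{T\in\mathrm{SYT}(\lambda)}\zeta^{d\,\maj(T)}=\tr_{S^\lambda}(c^d)\qquad(\lambda\vdash r).
\]

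The main obstacle is this last identity, which is a classical but nontrivial theorem. It is Springer's theorem on regular elements applied to the long cycle $c$, a regular element of order $r$ in $\fS_r$; equivalently, by Kra\'skiewicz--Weyman, the number of standard Young tableaux of shape $\lambda$ whose major index is congruent to $k$ modulo $r$ equals the multiplicity of $\zeta^k$ as an eigenvalue of $c$ acting on $S^\lambda$. Granting this identity, summing over $\lambda$ with multiplicities $m_\lambda$ and combining with the two reductions above yields $P(\zeta^d)=\tr\rho(c^d)=\#\{x\in X:c^d x=x\}$, as required.
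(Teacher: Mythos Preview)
The paper does not give its own proof of this theorem; it simply quotes the result from the reference~\cite{1512}. Your argument is correct and is exactly the standard one: the hypothesis that $c$ permutes $X$ converts the fixed-point count into $\tr\rho(c^d)$, linearity reduces to a single irreducible, and the remaining identity $\sum_{T\in\mathrm{SYT}(\lambda)}\zeta^{d\,\maj(T)}=\chi^\lambda(c^d)$ is precisely Springer's theorem for the regular element $c$ (equivalently the Kra\'skiewicz--Weyman result). This is presumably the argument behind the cited reference as well, so there is nothing to contrast.
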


The most straightforward application of this theorem is to
permutation representations of $\fS_r$. Here the representation and
the basis are given so it remains to determine the Frobenius
character and its fake degree polynomial.
In general, the action of the symmetric group on the space of
invariant tensors is not a permutation representation, that is, there
is no basis of the space of invariant tensors that is permuted by the
action of $\fS_r$.

As mentioned in the introduction, the Frobenius character of
$\Hom_{\Sp(V)}\big(W(\mu),\otimes^r V\big)$ was obtained in
\cite{MR2941115} (using combinatorics) and in \cite{MR933441} (using
representation theory).  For the special case of the invariant
tensors, a geometric proof can be found in Procesi~\cite[Equation
11.5.1.6]{MR2265844}.
\begin{lemma}\label{lem:it}
  \begin{equation*}
    \ch \Hom_{\T_{\Sp(2n)}}(0,2r) = \sum_{\substack{\lambda\vdash 2r\\
        \text{columns of even length}\\ \ell(\lambda)\le 2n}}
    s_{\lambda^t}.
  \end{equation*}
\end{lemma}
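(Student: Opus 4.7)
The plan is to identify $\Hom_{\T_{\Sp(2n)}}(0,2r)$ with the space of $\Sp(2n)$-invariants in $\otimes^{2r}V$ (as an $\fS_{2r}$-module) and compute its Frobenius character via classical Schur-Weyl duality for $\GL(V)$ combined with Littlewood's restriction rule $\GL(V)\downarrow\Sp(V)$. The essential subtlety is the sign twist implicit in the paper's convention that $V$ is an \emph{odd} vector space, visible in the crossing $u\otimes v\mapsto -v\otimes u$ of Definition~\ref{defn:ev}.

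First, I would invoke classical Schur-Weyl duality: in the ungraded convention, $\otimes^{2r}V$ decomposes as a $\GL(V)\times\fS_{2r}$-bimodule as
\begin{equation*}
\otimes^{2r}V \;\cong\; \bigoplus_{\substack{\lambda\vdash 2r\\ \ell(\lambda)\le 2n}} V^{\GL}_\lambda \otimes S^\lambda,
\end{equation*}
where $V^{\GL}_\lambda$ is the irreducible polynomial $\GL(V)$-module of highest weight $\lambda$ and $S^\lambda$ is the Specht module. Taking $\Sp(V)$-invariants on each factor gives $\ch = \sum_\lambda m_\lambda\,s_\lambda$ with $m_\lambda = \dim\Hom_{\Sp(V)}(\bC,V^{\GL}_\lambda)$. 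By Littlewood's branching rule, $m_\lambda=1$ precisely when every column of $\lambda$ has even length, and $m_\lambda=0$ otherwise; the constraint $\ell(\lambda)\le 2n$ is automatic since $V^{\GL}_\lambda\neq 0$ requires it.

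Second, I would account for the sign twist. In the paper's convention the transposition of two adjacent tensor factors acts by $u\otimes v\mapsto -v\otimes u$ rather than by the ungraded swap; since $\fS_{2r}$ is generated by adjacent transpositions, the $\fS_{2r}$-action on $\otimes^{2r}V$ in the paper's setup is the ungraded one tensored with the sign character. Tensoring by sign applies the Frobenius involution $\omega\colon s_\lambda\mapsto s_{\lambda^t}$, and this converts the intermediate character above into the asserted formula.

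The main obstacle is the careful bookkeeping of the sign twist; once that is settled, the rest reduces to citing Schur-Weyl and Littlewood. As a sanity check at $n=r=1$, the only partition of $2$ with columns of even length and $\ell\le 2$ is $\lambda=(1,1)$, so the formula predicts $s_{(2)}$; and indeed the unique symplectic invariant in $V\otimes V$ lies in $\wedge^2 V$, which is fixed by the sign-twisted swap and thus carries the trivial $\fS_2$-module. Alternative routes to the same character, which the authors may prefer to invoke directly, are Sundaram's bijective argument~\cite{MR2941115} and the geometric proof in Procesi~\cite{MR2265844}.
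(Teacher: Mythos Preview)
The paper does not supply its own proof of this lemma: the sentence immediately preceding it attributes the result to Sundaram~\cite{MR2941115} (combinatorially), to~\cite{MR933441} (representation-theoretically), and to Procesi~\cite{MR2265844} (geometrically), and the lemma is then simply stated. Your argument via $\GL(V)\times\fS_{2r}$ Schur--Weyl duality, restriction to $\Sp(V)$, and the sign twist coming from the odd-super convention is a correct and standard derivation---essentially the representation-theoretic route the paper cites. Your handling of the parity twist and the sanity check at $n=r=1$ are both accurate.

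One point deserves a little more care. Littlewood's branching rule for $\GL(2n)\downarrow\Sp(2n)$ in its usual form is only asserted in the stable range $\ell(\lambda)\le n$, whereas you need $\dim\bigl(V^{\GL}_\lambda\bigr)^{\Sp(2n)}=[\text{all columns of }\lambda\text{ even}]$ for every $\lambda$ with $\ell(\lambda)\le 2n$. This extended statement is true, but it is not literally ``Littlewood's branching rule''; outside the stable range the general branching coefficients require modification rules. For the trivial representation specifically, the full-range statement follows cleanly from the first and second fundamental theorems in their coordinate-ring form: for $m\le 2n$ the invariant ring $\bC[V^{\oplus m}]^{\Sp(2n)}$ is the free polynomial ring on the forms $\langle v_i,v_j\rangle$, whose $\GL(m)$-decomposition is $\bigoplus_{\delta'\text{ even}} V^{\GL(m)}_\delta$, and comparing with the Cauchy decomposition of $\bC[V^{\oplus m}]$ gives the claim. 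Either add a sentence to this effect or cite the result directly rather than appealing to Littlewood.
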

\begin{rem}
  The partitions indexing the Schur functions appearing in the
  Frobenius character are all transposed, since we defined $V$ to be
  an \emph{odd} vector space.
\end{rem}

\begin{thm}\label{thm:pmcsp} Let $X=X(r,n)$ be the set of $(n+1)$-noncrossing perfect
  matchings on $\{1,\dots,2r\}$ and let $\rho$ be the rotation map
  acting on $X$.  Let
  \begin{equation*}
    P(q) = \sum_{\substack{\lambda\vdash 2r\\
        \text{columns of even length}\\ \ell(\lambda)\le 2n}}
    \fd s_{\lambda^t}
  \end{equation*}
  Then the triple $\big(X, \rho, P(q)\big)$ exhibits the cyclic
  sieving phenomenon.
\end{thm}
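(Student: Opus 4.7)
The plan is to apply Theorem~\ref{cor:spr} to the representation $U = \Hom_{\dc_{\Sp(2n)}/\Pf^{(n)}}(0, 2r)$ of $\fS_{2r}$, where $\fS_{2r}$ acts on $U$ by composition on the right with permutation diagrams in $D_{2r}$. Theorem~\ref{thm:Brauer-basis} furnishes a basis consisting of the $(n+1)$-noncrossing diagrams in $D(0,2r)$, and these are literally the elements of $X(r,n)$ viewed as perfect matchings. A direct diagram computation shows that composing $d\in D(0,2r)$ on the right with the permutation diagram of the long cycle $c=(1,2,\dots,2r)$ rotates $d$ by one position: if $d$ matches $i$ and $j$, then $d\cdot p_c$ matches $i+1$ and $j+1$ modulo $2r$. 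In particular this action preserves $(n+1)$-noncrossingness and agrees with the rotation $\rho$ on $X(r,n)$, so $c$ permutes the basis $X\subseteq U$ as required.

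To compute $\fd\ch U$, I would transport the question across the second fundamental theorem. By Theorem~\ref{thm:sft} the functor $\bev_{\Sp(2n)}$ is an isomorphism, so $U$ and $\Hom_{\T_{\Sp(2n)}}(0,2r)$ are isomorphic as vector spaces; the key additional step is to observe that $\bev_{\Sp(2n)}$ is $\fS_{2r}$-equivariant for the two natural actions, so that Lemma~\ref{lem:it} applies to $U$ and yields
\[
\ch U \;=\; \sum_{\substack{\lambda\vdash 2r\\ \text{columns of even length}\\ \ell(\lambda)\le 2n}} s_{\lambda^t}.
\]
Taking the fake degree of both sides reproduces the polynomial $P(q)$ in the statement, and Theorem~\ref{cor:spr} then immediately delivers the cyclic sieving phenomenon for $\big(X(r,n),\rho,P(q)\big)$.

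The main subtlety I expect is verifying the $\fS_{2r}$-equivariance of $\bev_{\Sp(2n)}$, because this is where the ``odd'' convention for $V$ must be reconciled with the permutation action on diagrams. By the very definition of $\ev_{\Sp(2n)}$, the simple transposition diagram is sent to the super swap $u\otimes v\mapsto -v\otimes u$ of adjacent tensor factors, so a permutation diagram is sent to the action of $\fS_{2r}$ on $\otimes^{2r}V$ that regards $V$ as odd. This is exactly the action whose Frobenius character is recorded in Lemma~\ref{lem:it}, and indeed the transposition of partitions on the right-hand side of that lemma is precisely this sign-twist. Once this bookkeeping is in place, the rest of the argument is essentially a direct quotation of Theorem~\ref{cor:spr}.
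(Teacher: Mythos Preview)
Your proposal is correct and matches the paper's intended (though not explicitly written out) argument: apply Theorem~\ref{cor:spr} to $U=\Hom_{\dc_{\Sp(2n)}/\Pf^{(n)}}(0,2r)$ with the basis $X(r,n)$ supplied by Theorem~\ref{thm:Brauer-basis}, identify the long-cycle action with rotation, and compute $\ch U$ via Theorem~\ref{thm:sft} and Lemma~\ref{lem:it}. Your discussion of the sign/odd-superspace bookkeeping is exactly the point the paper alludes to in the remark following Lemma~\ref{lem:it}.
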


The two extreme cases of Theorem~\ref{thm:pmcsp}, $n=1$ and $n\ge r$
are known.  Putting $n=1$, $X(r,1)$ is the set of non-crossing
perfect matchings.  The corresponding cyclic sieving phenomenon can
be found in \cite{MR2557880} and \cite{MR2519848}.  For $n\ge r$,
$X(r,n)$ is the set of all perfect matchings.  The Frobenius
character of this permutation representation was expanded by
Littlewood into Schur functions.  Using $\circ$ for plethystic
composition, we have
\begin{equation*}
h_r\circ h_2=\sum_{\substack{\lambda\vdash 2r\\ \text{rows of even length}}} s_{\lambda}.
\end{equation*}

\begin{rem} Let $T$ be an oscillating tableau. The descent set of $T$ is defined in \cite{MR3226822}
and is denoted by $\mathbf{Des}(T)$. The major index of $T$ is
\begin{equation*}
 \mathbf{maj}(T) = \sum_{i\in \mathbf{Des}(T)}i
\end{equation*}
Then the main result of \cite{MR3226822} is that
 \begin{equation*}
   P(q) = \sum_T q^{\mathbf{maj}(T)}
 \end{equation*}
where the sum is over $n$-symplectic oscillating tableaux of length $2r$ and weight $0$.
\end{rem}

Theorem~\ref{thm:pmcsp} can be generalised. Informally, we consider
the set of $k$-regular graphs (with loops prohibited but multiple
edges allowed) on $r$ vertices which are also $(n+1)$-noncrossing.
More precisely, consider $\{1,\dots kr\}$ as a cyclically ordered
set, partitioned into $r$ blocks of $k$ consecutive elements each.
Let $X=X(r,n,k)$ be the set of $(n+1)$-noncrossing perfect matchings
of $\{1,\dots kr\}$ such that there is no pair contained in a block
and if two pairs cross then the four elements are in four distinct
blocks, see Figure~\ref{fig:symmetric-powers} for an example.
Finally, let $\rho$ be rotation by $k$ points, i.e.,
$\rho(i)=i+k-1\pmod{kr}+1$.

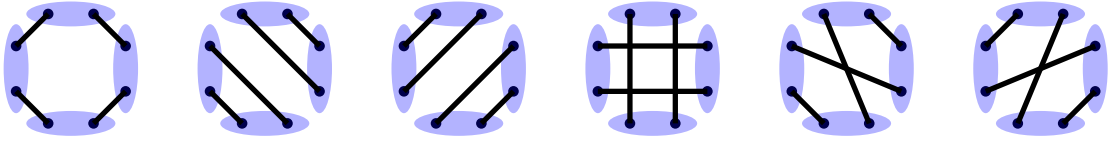
\begin{figure}
  \centering
  \begin{tikzpicture}[line width=2pt]
    \node[draw=none,Xsize,regular polygon,regular polygon sides=8] (a) {};
    \foreach \x in {1,2,...,8} 
    \fill (a.corner \x) circle[radius=2pt];
    \node[fit=(a.corner 3)(a.corner 4), block] {};
    \node[fit=(a.corner 5)(a.corner 6), block] {};
    \node[fit=(a.corner 7)(a.corner 8), block] {};
    \node[fit=(a.corner 1)(a.corner 2), block] {};
    \draw (a.corner 1) to (a.corner 8);
    \draw (a.corner 2) to (a.corner 3);
    \draw (a.corner 4) to (a.corner 5);
    \draw (a.corner 6) to (a.corner 7);
  \end{tikzpicture}
  \qquad
  \begin{tikzpicture}[line width=2pt]
    \node[draw=none,Xsize,regular polygon,regular polygon sides=8] (a) {};
    \foreach \x in {1,2,...,8}
    \fill (a.corner \x) circle[radius=2pt];
    \node[fit=(a.corner 3)(a.corner 4), block] {};
    \node[fit=(a.corner 5)(a.corner 6), block] {};
    \node[fit=(a.corner 7)(a.corner 8), block] {};
    \node[fit=(a.corner 1)(a.corner 2), block] {};
    \draw (a.corner 1) to (a.corner 8);
    \draw (a.corner 2) to (a.corner 7);
    \draw (a.corner 3) to (a.corner 6);
    \draw (a.corner 4) to (a.corner 5);
  \end{tikzpicture}  
  \qquad
  \begin{tikzpicture}[line width=2pt]
    \node[draw=none,Xsize,regular polygon,regular polygon sides=8] (a) {};
    \foreach \x in {1,2,...,8}
    \fill (a.corner \x) circle[radius=2pt];
    \node[fit=(a.corner 3)(a.corner 4), block] {};
    \node[fit=(a.corner 5)(a.corner 6), block] {};
    \node[fit=(a.corner 7)(a.corner 8), block] {};
    \node[fit=(a.corner 1)(a.corner 2), block] {};
    \draw (a.corner 1) to (a.corner 4);
    \draw (a.corner 2) to (a.corner 3);
    \draw (a.corner 5) to (a.corner 8);
    \draw (a.corner 6) to (a.corner 7);
  \end{tikzpicture}  
  \qquad
  \begin{tikzpicture}[line width=2pt]
    \node[draw=none,Xsize,regular polygon,regular polygon sides=8] (a) {};
    \foreach \x in {1,2,...,8}
    \fill (a.corner \x) circle[radius=2pt];
    \node[fit=(a.corner 3)(a.corner 4), block] {};
    \node[fit=(a.corner 5)(a.corner 6), block] {};
    \node[fit=(a.corner 7)(a.corner 8), block] {};
    \node[fit=(a.corner 1)(a.corner 2), block] {};
    \draw (a.corner 1) to (a.corner 6);
    \draw (a.corner 2) to (a.corner 5);
    \draw (a.corner 3) to (a.corner 8);
    \draw (a.corner 4) to (a.corner 7);
  \end{tikzpicture}  
  \qquad
  \begin{tikzpicture}[line width=2pt]
    \node[draw=none,Xsize,regular polygon,regular polygon sides=8] (a) {};
    \foreach \x in {1,2,...,8}
    \fill (a.corner \x) circle[radius=2pt];
    \node[fit=(a.corner 3)(a.corner 4), block] {};
    \node[fit=(a.corner 5)(a.corner 6), block] {};
    \node[fit=(a.corner 7)(a.corner 8), block] {};
    \node[fit=(a.corner 1)(a.corner 2), block] {};
    \draw (a.corner 1) to (a.corner 8);
    \draw (a.corner 2) to (a.corner 6);
    \draw (a.corner 3) to (a.corner 7);
    \draw (a.corner 4) to (a.corner 5);
  \end{tikzpicture}  
  \qquad
  \begin{tikzpicture}[line width=2pt]
    \node[draw=none,Xsize,regular polygon,regular polygon sides=8] (a) {};
    \foreach \x in {1,2,...,8}
    \fill (a.corner \x) circle[radius=2pt];
    \node[fit=(a.corner 3)(a.corner 4), block] {};
    \node[fit=(a.corner 5)(a.corner 6), block] {};
    \node[fit=(a.corner 7)(a.corner 8), block] {};
    \node[fit=(a.corner 1)(a.corner 2), block] {};
    \draw (a.corner 1) to (a.corner 5);
    \draw (a.corner 2) to (a.corner 3);
    \draw (a.corner 4) to (a.corner 8);
    \draw (a.corner 6) to (a.corner 7);
  \end{tikzpicture}  
  \caption{The six elements of $X(4,2,2)$.}
  \label{fig:symmetric-powers}
\end{figure}

For $n>kr$, the set $X(r,n,k)$ is independent of $n$ and can be
identified with the set of $k$-regular graphs with multiple edges
allowed but loops prohibited. These $k$-regular graphs would normally
be considered as a permutation representation of $\fS_r$ acting
by permuting the vertices. Here the action of $\fS_r$ is a linear
action on the space with basis $X(r,n,k)$.  Example \ref{ex:diff}
below shows that these actions are different.

\begin{lemma}
  The invariant tensors corresponding to the diagrams $X(r,n,k)$ form
  a basis of the space of $\Sp(2n)$-invariants in the $r$-th tensor
  power of the $k$-th symmetric power of the defining representation
  of $\Sp(2n)$.
\end{lemma}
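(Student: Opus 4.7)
The plan is to use Theorem~\ref{thm:sft} and the $(n+1)$-noncrossing basis of Theorem~\ref{thm:Brauer-basis} to reduce the statement to a combinatorial analysis of a blockwise antisymmetriser applied to diagrams. In the odd-vector-space convention of Definition~\ref{defn:ev}, each diagrammatic transposition in $D_k$ evaluates to $-1$ times the genuine swap on $\otimes^k V$, so the diagrammatic antisymmetriser
\[
A_k \;=\; \frac{1}{k!}\sum_{\sigma\in\fS_k}(-1)^{\sigma}\sigma\;\in\; D_k
\]
evaluates to the classical symmetriser $\otimes^k V\twoheadrightarrow \mathrm{Sym}^k V$. Hence
\[
\Hom_{\Sp(2n)}\!\bigl(\bC,\otimes^r\mathrm{Sym}^k V\bigr)\;\cong\;A_k^{\otimes r}\cdot\Hom_{\dc_{\Sp(2n)}/\Pf^{(n)}}(0,kr),
\]
and Theorem~\ref{thm:Brauer-basis} provides a basis of the right-hand side by $(n+1)$-noncrossing diagrams in $D(0,kr)$.

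Next I would show that applying $A_k^{\otimes r}$ to this basis yields, up to sign, the images of elements of $X(r,n,k)$. The identity $A_k\cdot s = -A_k$ for every transposition $s\in D_k$ has two consequences. First, if $d$ contains a pair whose two endpoints lie in one block, the endpoint transposition fixes $d$ while flipping the sign, so $A_k^{\otimes r}\cdot d = 0$. Second, if $d$ has two pairs that cross and share a block, swapping the two shared-block endpoints produces a diagram $d'$ with $A_k^{\otimes r}\cdot d = -A_k^{\otimes r}\cdot d'$, in which those two pairs are now nested. Iterating this swap-reduction drives any $(n+1)$-noncrossing $d$ without within-block pairs to an element of $X(r,n,k)$, so the set $\{A_k^{\otimes r}\cdot d : d \in X(r,n,k)\}$ already spans the invariant space.

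For linear independence I would expand
\[
A_k^{\otimes r}\cdot d \;=\; \frac{1}{(k!)^r}\sum_{\tau\in(\fS_k)^r}(-1)^{\tau}\,\tau\cdot d
\]
in the $(n+1)$-noncrossing basis and apply a triangularity argument ordered by (a refinement of) the statistic \lq number of crossings that share a block\rq. For $d\in X(r,n,k)$, the stabiliser of $d$ in $(\fS_k)^r$ acts on the $m$ pairs connecting any two fixed blocks $B_i,B_j$ by an arbitrary permutation $\pi\in\fS_m$, inducing $(-1)^{\pi}$ on the $B_i$-endpoints and $(-1)^{\pi}$ on the $B_j$-endpoints; the product contribution is therefore $+1$, and hence the coefficient of $d$ in $A_k^{\otimes r}\cdot d$ is strictly positive. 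Every other term $\tau\cdot d$ either contains a within-block pair (and is annihilated in subsequent reductions) or reduces via the swap-reduction to an element of $X(r,n,k)$ strictly below $d$ in the chosen order, yielding the required unitriangularity.

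The main obstacle will be making this triangularity precise: swapping two endpoints inside a single block may create or destroy crossings with a third pair, so one must verify both that the swap-reduction preserves the $(n+1)$-noncrossing property and that the chosen statistic strictly decreases at every step. A cleaner alternative bypasses this altogether via a dimension count: compute $\dim\Hom_{\Sp(2n)}(\bC,\otimes^r\mathrm{Sym}^k V)$ from Lemma~\ref{lem:it} by a plethystic manipulation, then match with $|X(r,n,k)|$ through a $k$-block generalisation of Sundaram's bijection (Lemma~\ref{lem:Brauer-Sundaram}), upgrading the spanning set from the previous paragraph to a basis.
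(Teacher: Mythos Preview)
The paper states this lemma without proof, so there is no argument of the authors to compare against; your proposal must stand on its own.

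Your reduction to the image of the idempotent $A_k^{\otimes r}$ on $\Hom_{\dc_{\Sp(2n)}/\Pf^{(n)}}(0,kr)$ is correct, as is the identification of the diagrammatic antisymmetriser with the classical symmetriser in the odd-vector-space convention. The stabiliser-sign computation is also right; it tacitly uses that for $d\in X(r,n,k)$ the positions in any block going to a fixed other block are consecutive, which does follow from the no-shared-block-crossing condition.

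For spanning, your swap-reduction becomes clean if you restrict to \emph{adjacent} transpositions within a block: such a swap flips exactly one crossing (between the two affected arcs) and leaves every other crossing unchanged, so it strictly decreases the total crossing number while preserving $(n+1)$-noncrossingness. A short induction on the distance of the two shared-block endpoints shows that a shared-block crossing always forces an adjacent one, so the process terminates in $X(r,n,k)$. This repairs the obstacle you flag.

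The linear-independence argument, however, has a more serious gap than you acknowledge. First, your dichotomy is off: for $d\in X(r,n,k)$ and $\tau\in(\fS_k)^r$, the diagram $\tau d$ \emph{never} acquires a within-block pair, and swap-reduction within the orbit can only return to $d$ itself (the unique orbit representative in $X(r,n,k)$), not to something strictly below it; indeed your statistic ``number of shared-block crossings'' vanishes on all of $X(r,n,k)$, so there is nothing strictly below $d$. Second, and more fundamentally, $\tau d$ need not be $(n+1)$-noncrossing, so expanding it in the basis of Theorem~\ref{thm:Brauer-basis} requires Pfaffian reductions, and these \emph{mix} $(\fS_k)^r$-orbits. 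In particular the Pfaffian expansion of $\tau d$ can contribute to the coefficient of $d$: already for $n=1$, $k=2$, $r=2$ the crossing matching $\{(1,3),(2,4)\}$ arises as some $\tau d$, and its Pfaffian reduction contains $d=\{(1,4),(2,3)\}$. Thus neither positivity of the diagonal entry nor control of the off-diagonal entries follows from your argument.

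Your alternative via a dimension count is the right way to close the gap, and is in line with how the paper handles the $k=1$ case: spanning as above, together with $\lvert X(r,n,k)\rvert=\dim\Hom_{\Sp(2n)}(\bC,\otimes^r\mathrm{Sym}^k V)$, finishes the proof. The right-hand side is accessible via the branching rule for tensoring with $\mathrm{Sym}^k V$ (in the spirit of Lemma~\ref{lem:Brauer-basis-oscillating-tableaux}), and the left can be matched to it by a block-refined version of Sundaram's bijection (Lemma~\ref{lem:Brauer-Sundaram}).
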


The Frobenius character of this representation is given by an
application of \cite[Theorem 1]{6111}.
\begin{lemma}
  The Frobenius character of the space of $\Sp(2n)$-invariants in the
  $r$-th tensor power of the $k$-th symmetric power of the defining
  representation of $\Sp(2n)$ is
  \begin{equation}\label{eqn:invs}
    \sum_{\substack{\lambda\vdash kr\\
        \text{columns of even length}\\ \ell(\lambda)\le 2n}}
    \big\langle  h_r\big(X\cdot e_k(Y)\big), %
    s_{\lambda^t}(Y)  \big\rangle_Y
  \end{equation}
  where $e_k$ is the $k$-th elementary symmetric function and
  $\langle\ ,\ \rangle_Y$ denotes the scalar product of symmetric
  functions with respect to the alphabet $Y$.
\end{lemma}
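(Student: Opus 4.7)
The plan is to invoke \cite[Theorem 1]{6111} --- which expresses the joint $\fS_r\times\GL(V)$ Frobenius character of $\otimes^r W$ in terms of the $\GL$-character of $W$ --- and then restrict from $\GL(2n)$ to $\Sp(2n)$ and extract the trivial isotypic component using Littlewood's branching rule.

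First, I would identify the $\GL(2n)$-character of $W=\mathrm{Sym}^k V$, where the symmetric power is taken in the odd sense that underlies Lemma~\ref{lem:it}. Because $V$ is odd, this odd $k$-th symmetric power coincides character-theoretically with the ordinary $k$-th exterior power, whose $\GL(2n)$-character in an alphabet $Y=(y_1,\dotsc,y_{2n})$ is the elementary symmetric function $e_k(Y)$. Feeding this into \cite[Theorem 1]{6111} yields the joint $\fS_r\times\GL(2n)$ Frobenius character
\begin{equation*}
  \ch\big(\otimes^r\mathrm{Sym}^k V\big) = h_r\big(X\cdot e_k(Y)\big),
\end{equation*}
with $X$ the alphabet carrying the $\fS_r$-action.

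Next, I would extract the $\Sp(2n)$-invariants by branching from $\GL(2n)$ to $\Sp(2n)$. Littlewood's classical rule says that the trivial $\Sp(2n)$-representation occurs with multiplicity one inside the irreducible $\GL(2n)$-representation of highest weight $\mu$ (with $\ell(\mu)\le 2n$) precisely when every row of $\mu$ has even length, and does not occur otherwise. Because of the odd convention, every partition on the $\GL(2n)$-side appears transposed relative to its usual symplectic indexing; writing $\mu=\lambda^t$, the condition that the rows of $\mu$ be even becomes precisely that the columns of $\lambda$ be even, and $\ell(\mu)\le 2n$ translates into the stated bound on $\ell(\lambda)$. Taking the $Y$-scalar-product of $h_r(X\cdot e_k(Y))$ against each such $s_{\lambda^t}(Y)$ then isolates the Frobenius character (in $X$) of the corresponding $\Sp(2n)$-invariant subspace, producing~\eqref{eqn:invs}.

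The only genuine subtlety is propagating the odd-vector-space convention consistently so that the transpose appears on the correct side and the row/column parity condition lands in the stated form; everything else is routine symmetric-function bookkeeping. The argument closely parallels the derivation of Lemma~\ref{lem:it}, merely replacing the $\GL$-character $h_r(Y)$ of $\otimes^r V$ there by the plethystic expression $h_r(X\cdot e_k(Y))$ here.
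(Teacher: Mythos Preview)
Your overall strategy --- feed the character of $\mathrm{Sym}^k V$ into the plethystic formula of \cite[Theorem~1]{6111} and then extract the $\Sp(2n)$-trivial isotypic component via Littlewood's branching rule --- matches the paper, whose entire argument is the phrase ``an application of \cite[Theorem~1]{6111}''. At the level of approach there is nothing further to compare.

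However, two of your intermediate claims are wrong as stated. First, Littlewood's rule for $\Sp(2n)$ places a copy of the trivial representation inside $V_\mu^{\GL(2n)}$ precisely when all \emph{columns} of $\mu$ have even length, not all rows; the even-row condition belongs to the orthogonal group. (Check $\Sp(2)=\SL(2)$: the restriction of $V_{(1,1)}$ is trivial, whereas that of $V_{(2)}$ is not.) Second, under $\mu=\lambda^t$ the bound $\ell(\mu)\le 2n$ becomes $\lambda_1\le 2n$, not $\ell(\lambda)\le 2n$ as you assert. These two slips and your invocation of the odd convention happen to compensate for one another, so you do land on \eqref{eqn:invs}, but the reasoning is not sound as written. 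You rightly flag the odd convention as the one genuine subtlety, and that is exactly where the argument needs repair: the odd convention twists the $\fS_r$-side (this is why $s_{\lambda^t}$ appears already in Lemma~\ref{lem:it}) rather than transposing the $\GL(2n)$-indexing, so the transpose should be tracked through the plethysm and the $X$-alphabet, not inserted into Littlewood's rule after the fact.
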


\begin{thm}
  Let $X=X(r,n,k)$ and let $\rho$ be the rotation map.  Define $P(q)$
  to be the fake degree of the symmetric function \eqref{eqn:invs}.
  Then $(X,\rho,P)$ exhibits the cyclic sieving phenomenon.
\end{thm}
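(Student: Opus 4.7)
The plan is to apply Theorem~\ref{cor:spr} to the representation $U = \Hom_{\Sp(2n)}(\bC, \otimes^r S^k V)$ of $\fS_r$, where $\fS_r$ acts on $\otimes^r S^k V$ by permuting the $r$ tensor factors.  This action commutes with the diagonal $\Sp(2n)$-action and therefore descends to $U$.  The preceding lemma identifies the Frobenius character $\ch(U)$ with the symmetric function in \eqref{eqn:invs}, so its fake degree is exactly the polynomial $P(q)$ in the theorem statement.  Thus the only remaining substance is to exhibit $X(r,n,k)$ as a basis of $U$ on which the long cycle $c=(1\,2\,\dots\,r)\in\fS_r$ acts via the rotation map $\rho$.

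That $X(r,n,k)$ is a basis follows immediately from the lemma just above the statement.  For the compatibility of $c$ with $\rho$, I would argue as follows.  Each diagram $d\in X(r,n,k)$, viewed inside the Brauer category, maps under $\ev_{\Sp(2n)}$ to an invariant tensor in $\otimes^{kr}V$; grouping the $kr$ tensor positions into $r$ blocks of $k$ consecutive positions and symmetrizing within each block lands this tensor in $\otimes^r S^k V$, and the block-disjointness condition built into the definition of $X(r,n,k)$ is precisely what ensures no information is lost by this symmetrization.  The long cycle $c\in\fS_r$ cyclically permutes these $r$ tensor factors, which on the diagrammatic side is the relabeling $i\mapsto i+k\pmod{kr}$ of $\{1,\dots,kr\}$, i.e.\ the rotation $\rho$.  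Since the $(n+1)$-noncrossing and block-disjoint conditions are invariant under this rotation, $\rho$ indeed permutes $X(r,n,k)$, and the identification of $c$ with $\rho$ on the basis is immediate.

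The main obstacle I anticipate is the bookkeeping needed to confirm that blockwise symmetrization intertwines the $\fS_r$-action on $\otimes^r S^k V$ with the restriction of the $\fS_{kr}$-action on $\otimes^{kr}V$ to the block-permutation subgroup, in a way that translates cleanly into the rotation $\rho$.  Once this equivariance is in place, combined with the basis property, Theorem~\ref{cor:spr} applies directly with $P(q)=\fd\,\ch(U)$, and the preceding Frobenius character lemma rewrites $\fd\,\ch(U)$ as the fake degree of \eqref{eqn:invs}, completing the proof.
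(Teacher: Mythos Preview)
Your proposal is correct and is exactly the argument the paper intends: the theorem is not given a separate proof in the paper, but is meant to follow immediately from Theorem~\ref{cor:spr} once the two preceding lemmas supply, respectively, a basis $X(r,n,k)$ of the invariant space and its Frobenius character \eqref{eqn:invs}. Your outline reproduces this.

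One point worth making explicit, since you flag it as the ``main obstacle'': the sign bookkeeping is genuinely there and is governed by the paper's convention that $V$ is an \emph{odd} vector space. In this convention the $k$-th symmetric power $S^kV$ has parity $k\bmod 2$, so the $\fS_r$-action on $\otimes^r S^kV$ is the super-permutation action, and the Frobenius character in \eqref{eqn:invs} (with its $e_k$ and transposed Schur functions) is computed for \emph{that} action. On the diagram side, rotation by $k$ points in $D(0,kr)$ is sign-free, and the functor $\ev_{\Sp(2n)}$ converts the diagrammatic rotation into the super-cyclic shift of tensor factors; the two signs one might worry about (from the crossing $u\otimes v\mapsto -v\otimes u$ and from the parity of $S^kV$) cancel so that the long cycle genuinely \emph{permutes} the images of $X(r,n,k)$ rather than signed-permuting them. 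The two defining conditions on $X(r,n,k)$ (no pair inside a block; crossing pairs lie in four distinct blocks) are exactly what is needed to pick one diagram per orbit under within-block permutation and to guarantee the blockwise (anti)symmetrisation does not kill it. Once this is said, Theorem~\ref{cor:spr} applies verbatim.
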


The case $k=1$ is Theorem~\ref{thm:pmcsp} and the case $k=2$ is
related to the invariant tensors of the adjoint representation in
\cite{MR792707}. The case $n=1$ is implicit in \cite{MR1446615} and
the case $n=2$ is related to the $C_2$ webs in \cite{MR1403861}.

Instead of the $k$-th symmetric power of $V$ we can also consider the
$k$-th fundamental representation of $\Sp(2n)$.  Again using
\cite[Theorem 1]{6111} we obtain the following expression for the
Frobenius character of its tensor powers.
\begin{lemma}
  The Frobenius character of the space of $\Sp(2n)$-invariants in the
  $r$-th tensor power of the $k$-th fundamental representation of
  $\Sp(2n)$ is
  \begin{equation}\label{eqn:invf}
    \sum_{\substack{|\lambda|\le kr\\
        \text{columns of even length}\\ \ell(\lambda)\le 2n}}
    \big\langle  h_r\big(X\cdot (h_k-h_{k-2})(Y)\big), %
    s_{\lambda^t}(Y)  \big\rangle_Y
  \end{equation}
\end{lemma}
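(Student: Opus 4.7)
My plan is to follow the same strategy the authors use for the preceding lemma on symmetric powers, applying \cite{6111} after first identifying the Schur-function character of the $k$-th fundamental representation of $\Sp(2n)$.

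The first step is to recall the classical decomposition of the exterior powers of the defining representation: for $1 \le k \le n$ one has $\wedge^k V = W(\omega_k) \oplus \wedge^{k-2} V$, and hence inductively the character of $W(\omega_k)$ equals the character of $\wedge^k V$ minus the character of $\wedge^{k-2} V$. Because this paper treats $V$ as an \emph{odd} vector space, symmetric and exterior powers are swapped, so on the $Y$-alphabet of formal eigenvalues the character of the $k$-th fundamental representation becomes $h_k(Y) - h_{k-2}(Y)$, in direct analogy with the $e_k(Y)$ that appears for the $k$-th symmetric power in the preceding lemma.

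The second step is to feed this character into \cite[Theorem 1]{6111}, which produces the Frobenius character of the space of $\Sp(2n)$-invariants in $\otimes^r W$, for an arbitrary rational representation $W$ of $\Sp(2n)$, as a sum indexed by partitions $\lambda$ with columns of even length and $\ell(\lambda)\le 2n$ of scalar products of the form
\begin{equation*}
\big\langle h_r\big(X\cdot f(Y)\big),\ s_{\lambda^t}(Y)\big\rangle_Y,
\end{equation*}
where $f(Y)$ is the symmetric-function character of $W$. Specialising $f = h_k - h_{k-2}$ yields exactly \eqref{eqn:invf}. The size constraint $|\lambda|\le kr$ is then the obvious degree bound: the expression $h_r\big(X\cdot(h_k-h_{k-2})(Y)\big)$ is supported in $Y$-degrees at most $kr$, so for $|\lambda|>kr$ the scalar product vanishes automatically and those terms may be dropped from the sum.

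The only non-citational input is the character identity $\chi\big(W(\omega_k)\big) = \chi(\wedge^k V) - \chi(\wedge^{k-2} V)$, which is completely classical, so I do not anticipate any real obstacle; the remainder of the proof is a direct substitution into the formula already established in the symmetric-power case.
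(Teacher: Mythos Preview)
Your proposal is correct and follows the same approach as the paper: the paper gives no proof beyond the sentence ``Again using \cite[Theorem~1]{6111} we obtain the following expression\dots'', and your argument is exactly this application, fleshed out with the standard identification of the character of $W(\omega_k)$ as $h_k-h_{k-2}$ in the odd-vector-space convention and the obvious degree bound.
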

Note that in general, this space cannot have a basis invariant under
cyclic rotation.  For example, for $n=1$, $k=3$ and $r=2$ its
Frobenius character evaluates to $s_{1,1}$ with fake degree
polynomial equal to $q$, which is not a cyclic sieving polynomial.

However, let us compare the two Frobenius characters for $n>kr$.
Setting $H=1+h_1+h_2+\cdots$ we obtain for the symmetric powers
\begin{equation}\label{eqn:regs}
  \big\langle  h_r\big(X\cdot e_k(Y)\big), %
  H(h_2(Y))  \big\rangle_Y%
\end{equation}
whereas for the fundamental representation the expression becomes
\begin{equation}\label{eqn:regf}
  \big\langle  h_r\big(X\cdot (h_k-k_{k-2})(Y)\big), %
  H(h_2(Y))  \big\rangle_Y
\end{equation}

For $n>kr$, the set $X(r,n,k)$ is independent of $n$ and can be
identified with the set of $k$-regular graphs with multiple edges
allowed but loops prohibited.  This is a species whose Frobenius
character is given by equation~\eqref{eqn:regf}, see Travis~\cite{MR2698697}.

The action of the symmetric group in the first case is defined using
a sign and is therefore not a permutation representation, as
illustrated by the following example.
\begin{ex}\label{ex:diff} Putting $k=2$ and $r=6$, the formula \eqref{eqn:regs} for
  invariant tensors gives
  \begin{equation}
    \frac{1}{72}(13p_{1^6} + 12p_{21^4} + 63p_{2^21^2} +
    54p_{2^3} + 4p_{31^3} - 12p_{321} + 28p_{3^2} + 18p_{41^2}
    + 36p_{42} + 36p_{6})
  \end{equation}
  and the formula \eqref{eqn:regf} for regular graphs gives
  \begin{equation*}
    \frac{1}{72}(13p_{1^6} + 24p_{21^4} + 63p_{2^21^2} +
    54p_{2^3} + 4p_{31^3} + 12p_{321} + 28p_{3^2} + 18p_{41^2}
    + 36p_{42} + 36p_{6})
 \end{equation*}
\end{ex}

\section{Partitions and directed matchings}
The discussion in the previous sections has been about the Brauer category
and the categories of invariant tensors for the vector representation of $\Sp(2n)$.
In this section we discuss two other examples where we have a diagram category
which is related to a sequence of categories of invariant tensors.

In the first example we consider the linear representation $V$ of the
symmetric group $\fS_n$ associated to its defining representation.
The diagram category in this case is the partition category studied
in~\cite{MR2143201}.

Let $I(r,n)=\Hom_{\fS_n}(\bC,\otimes^{r}V)$ be the space of invariant
tensors with the action of $\fS_{r}$ and let $X(r,n)$ be the set of
set partitions of $\{1,\dots,r\}$ into at most $n$ blocks.  Then
there is a map $X(r,n)\rightarrow I(r,n)$ such that the image is a
basis.  Furthermore this basis is preserved by the action of $\fS_r$.
It follows that the Frobenius character is the homogeneous component
of degree $r$ in $h_n\circ H$, where $H=1+h_1+h_2+\cdots$.

As in the case of the Brauer category, this result can be
generalised, by considering the tensor powers of the $k$-th symmetric
power of the defining representation.  The resulting set $X(r,n,k)$
is then the set of multiset partitions into at most $n$ blocks of the
multiset $\{1,\dots,1,2,\dots,2,\dots, r,\dots,r\}$, each label
occuring precisely $k$ times.  The symmetric group $\fS_r$ acts on
this set by permuting the labels.  The Frobenius character can be
obtained using~\cite[Theorem 1]{6111} or alternatively with the
calculus of species and is given by
\[
\big\langle h_r\big(X\cdot h_k(Y)\big), %
h_n(H(Y)) \big\rangle_Y.
\]

In the second example we consider the adjoint representation $V$ of
the general linear group $\GL(n)$.  In this case the diagrams are
Brauer diagrams in which every edge is directed.

For $n\geq 2r$ let $X(r)$ be the set of permutations with the
conjugation action of $\fS_r$.  Its Frobenius character is obtained
in \cite{MR768993} using character calculations. Our methods give a
simple proof. The Frobenius character is
\begin{equation*}
 \sum_{\lambda\vdash r} p_\lambda =  \sum_{\lambda\vdash r} s_\lambda \ast s_\lambda
\end{equation*}
where $\ast$ denotes the Kronecker or inner product.

Let $I(r,n)=\Hom_{\GL(n)}(\bC,\otimes^{r}V)$ be the space of invariant
tensors with the action of $\fS_{r}$.  Considering the diagrams
we obtain the new result that the Frobenius character of this representation is
\begin{equation*}
  \sum_{\substack{\lambda\vdash r \\ \ell(\lambda)\le n}} s_\lambda \ast s_\lambda
\end{equation*}

However, for $n<2r$ it is an open problem to determine a set $X(r,n)$
with a map $X(r,n)\rightarrow I(r,n)$ such that the image is a basis
and is preserved by the action of the long cycle.  So we are not able
to exhibit an example of the cyclic sieving phenomenon in this case.

\printbibliography
\end{document}